\definecolor{darkgreen}{rgb}{0,0.6,0}
\newtheorem{defn}{Definition}
\newtheorem{lem}[defn]{Lemma}
\newtheorem{thm}[defn]{Theorem}
\newtheorem{cor}[defn]{Corollary}
\newtheorem{ass}[defn]{Assumption}
\newtheorem{Bem}[defn]{Remark}
\def\eps{\varepsilon}
\newcommand{\R}{\mathbb{R}}
\newcommand{\N}{\mathbb{N}}
\newcommand{\K}{\mathcal{K}}
\newcommand{\U}{\mathcal{U}}
\newcommand{\X}{\mathcal{X}}
\newcommand{\XX}{\mathbb{X}}
\newcommand{\W}{\mathcal{W}}
\newcommand{\B}[1]{\mathcal{B}(#1)}
\newcommand{\RR}[1]{\mathcal{R}(#1)}
\newcommand{\F}{\mathcal{F}}
\newcommand{\Prob}{\mathbb{P}}
\newcommand{\Exp}[1]{\mathbb{E}\left[#1\right]}
\title{\LARGE \bf
Near-optimal performance of stochastic economic MPC
}
\author{Jonas Schießl, Ruchuan Ou, Timm Faulwasser, Michael H. Baumann, and Lars Grüne
\thanks{
The authors gratefully acknowledge that this work was funded by the Deutsche Forschungsgemeinschaft (DFG, German Research Foundation) – project number 499435839.}
\thanks{Jonas Schießl, Michael H. Baumann and Lars Grüne are with Mathematical Institute, University of Bayreuth, Germany,
        \tt\small \{jonas.schiessl,michael.baumann,lars.gruene\} @uni-bayreuth.de}%
\thanks{Ruchuan Ou and Timm Faulwasser are the with 
Institute for Energy Systems, Energy Efficiency and Energy Economics, TU Dortmund University, Germany and with the Institute of Control Systems, Hamburg University of Technology, Hamburg, Germany,
        \tt\small ruchuan.ou@tuhh.de, timm.faulwasser@ieee.org}%
}
\begin{document}

\maketitle
\thispagestyle{empty}
\pagestyle{empty}

\begin{abstract}
This paper presents first results for near optimality in expectation of the closed-loop solutions for stochastic economic MPC. The approach relies on a recently developed turnpike property for stochastic optimal control problems at an optimal stationary process, combined with techniques for analyzing time-varying economic MPC schemes. We obtain near optimality in finite time as well as overtaking and average near optimality on infinite time horizons.
\end{abstract}

\section{INTRODUCTION}

Stochastic Model Predictive Control (MPC) has been a subject of intense research over the past years. While topics like stability, constraint satisfaction and recursive feasibility have been thoroughly investigated (see, e.g., \cite{Kouvaritakis16a,Chatterjee15a,Lucia20a} for stability or \cite{Koehler23a} for recursive feasibility), only little is known about performance of the MPC closed loop. In fact, the only results we are aware of are non-tight bounds on the expected average performance as, e.g., in \cite{Chatterjee15a}.

Motivated by the development of a stochastic dissipativity and turnpike theory in \cite{Schiessl2023a,Schiessl2023b,Schiessl2023c}, in this paper we propose an approach for analyzing the expected closed-loop performance of a stochastic economic MPC scheme. Besides certain regularity assumptions, our main structural assumption on the stochastic optimal control problem that is solved iteratively in the MPC loop is that the optimal trajectories exhibit a turnpike property. As shown in \cite{Schiessl2023a}, in stochastic optimal control problems the turnpike property means that the optimal solutions most of the time stay close to an optimal stationary process. As this optimal process is stationary, its distribution is constant over time. Nevertheless, when seen as a stochastic process, i.e., as a time-dependent function in the space of random variables, the optimal process is time varying. Therefore, one can interpret the stochastic turnpike property as a deterministic time-varying turnpike property in the space of random variables, which enables us to apply techniques for analyzing the performance of deterministic time-varying economic MPC from \cite{Gruene2017,Pirkelmann2020}. Proceeding this way, we can show that the MPC closed-loop trajectories
\begin{itemize}
    \item are initial pieces of near-optimal infinite-horizon trajectories for a suitably shifted cost function;
    \item are near overtaking optimal and near averaged optimal for the original cost function,
\end{itemize}
all in expectation. 
While these results apply to an abstract MPC algorithm that requires the knowledge of the distribution of the closed-loop solutions (which is usually not available in practice), a result that we consider interesting on its own shows that this abstract algorithm has the same expected closed-loop cost as an MPC algorithm that only uses state information sampled along a single closed-loop path and is thus practically implementable. 

The paper is organized as follows. In Section \ref{sec:2}, we define the problem setting and present background information from stochastic dissipativity and turnpike theory and from stochastic dynamic programming. In Section \ref{sec:3}, we introduce the two MPC algorithms and analyze their relation. Section \ref{sec:4} contains the performance estimates and their proofs and in Section \ref{sec:5}, we illustrate our results by numerical simulations. Section \ref{sec:6} concludes the paper.

\section{SETTING AND PRELIMINARIES}\label{sec:2}

\subsection{Problem formulation}
    For Borel spaces $\X$, $\U$, and $\W$ and a continuous function
    \begin{equation*}
        f : \X\times \U \times \W \rightarrow \X, \quad (x,u,w) \mapsto f(x,u,w)
    \end{equation*}
    we consider the discrete-time stochastic system
    \begin{equation} 
    \label{eq:stochSys}
        X(k+1) = f(X(k),U(k),W(k)), \quad X(0) = X_0.
    \end{equation}
    Here, the initial condition $X_0 \in \RR{\Omega,\X}$, the states $X(k) \in \RR{\Omega,\X}$, the controls $U(k) \in \RR{\Omega,\U}$ and the noise $W(k) \in \RR{\Omega,\W}$ are random variables on the probability space $(\Omega, \mathcal{B}(\Omega), \Prob)$ for all $k \in \N_0$, where 
    \[\RR{\Omega,\X} := \{X: (\Omega, \B{\Omega}) \rightarrow (\X,\B{\X}) \mbox{ measurable} \}\]
    and $\B{\X}$ is the Borel $\sigma$-algebra on $\X$.
    Furthermore, $W(k)$ is independent of $X(k)$ and $U(k)$ for all $k \in \N_0$ the sequence $\{W(k)\}_{k \in \N_0}$ is \emph{i.i.d.} with distribution $P_W$.
    
    Additionally, we assume that the control process $\mathbf{U} := (U(0),U(1),\ldots)$ is measurable with respect to the natural filtration $(\F_k)_{k \in \N_0}$, i.e.
    \begin{equation} \label{eq:Filtration}
        \sigma(U(k)) \subseteq \F_k := \sigma((X(0),\ldots,X(k)), \quad k \in \N_0,
    \end{equation}
    for all $k \in \N_0$. This condition can be seen as a causality requirement, formalizing that we do not use information about the future noise and only the information contained in $X(0),\ldots,X(k)$ about the past noise when deciding about our control values. For more details on stochastic filtrations we refer to \cite{Fristedt1997, Protter2005}. We call a control sequence that satisfies \eqref{eq:Filtration} admissible and denote the set of all admissible control sequences for the initial value $X_0$ on horizon $N \in \N \cup \{\infty\}$ by $\U^N(X_0)$.
    For a given initial value $X_0$ and control sequence $\mathbf{U}$, we denote the solution of system \eqref{eq:stochSys} by $X_{\mathbf{U}}(\cdot,X_0)$, or short by $X(\cdot)$ if the initial value and the control are unambiguous. 
    Note, that the solution $X_{\mathbf{U}}(\cdot,X_0)$ also depends on the disturbance $\mathbf{W}$. 
    However, for the sake of readability, we do not highlight this in our notation and assume in the following that $\mathbf{W} := (W(0),W(1),\ldots)$ is an arbitrary but fixed stochastic process.
    
    Given a continuous function $g: \X \times \U \rightarrow \R$ bounded from below and a time horizon $N \in \N \cup \{\infty\}$ we define the stage cost $\ell(X,U) := \Exp{g(X,U)}$ and consider the stochastic optimal control problem
    \begin{equation} 
    \label{eq:stochOCP}
        \begin{split}
            \min_{\mathbf{U} \in \U^{N}(X_0)} &J_N(X_0,\mathbf{U}) := \sum_{k=0}^{N-1} \ell(X(k),U(k)) \\
            s.t. ~ X(k+1) &= f(X(k),U(k),W(k)), ~ X(0) = X_0.
        \end{split}
    \end{equation}
    By $V_N(X_0) := \inf_{\mathbf{U} \in \U^{N}(X_0)} J_N(X_0,\mathbf{U})$ we denote the optimal value function of the optimal control problem \eqref{eq:stochOCP} and if a minimizer of this problem exists we will denote it by $\mathbf{U}^*_N$ or $\mathbf{U}^*_{N,X_0}$ if we want to emphasize the dependence on the initial condition. 
    Note that problem \eqref{eq:stochOCP} is well defined in the sense of \cite[Chapter~8]{Bertsekas1996b}, since it satisfies the respective assumptions. However, this notion of well definedness only ensures that the expectation $\Exp{g(X,U)}$ exists but not that $\Exp{g(X,U)} < \infty$ for all $X \in \RR{\Omega, \X}, U \in \RR{\Omega, \U}$.
    Thus, in the following, we restrict ourselves to state-control pairs $(X,U)$ with $\vert \ell(X,U) \vert < \infty$ by defining the constraint set
    \begin{equation*}
    \begin{split}
        \mathbb{X} := \{ X &\in \RR{\Omega, \X} \mid ~\exists~ U \in \RR{\Omega, \X} : \\
        &\vert \ell(X,U) \vert < \infty, ~f(X,U,W) \in \mathbb{X}\}.
    \end{split}
    \end{equation*}
    For instance, if we consider the generalized linear-quadratic problem from \cite{Schiessl2023b} with $W(k) \in L^2(\Omega;\W)$ we have  $\mathbb{X} = L^2(\Omega,\X)$ and if $\X,\U,\W$ are bounded we can choose $\mathbb{X} = \X$.  
    Note that we can also directly conclude for all $X \in \mathbb{X}$ that $\vert V_N(X) \vert < \infty$ for all finite $N \in \N$ since $V_N(X) > - \infty$ by the  boundedness assumption on $g$ and  $V_N(X) = \infty$ would be a contradiction to the existence of a control with finite cost for $X \in \mathbb{X}$.
    In order to simplify the presentation in this paper, we do not consider additional constraints. We expect that these can be added to the setup at a later stage at the expense of a more technical exposition.

\subsection{Stochastic distributional turnpike}

    For the performance estimates we aim to obtain, it is crucial that the optimal solutions of the problem \eqref{eq:stochOCP} are close to a stationary solution, except for a finite number of time instants, which is independent of the optimization horizon $N$. Here closeness is measured in an appropriate metric for stochastic systems. 
    This behavior is called the turnpike property. Before formally defining it, we have to define a proper stationary concept for stochastic systems. To this end, we make the following definition of a pair of stationary stochastic processes.
    
    \begin{defn}[Stationary pair] \label{defn:StatPair}
        A pair of the stochastic processes $(\mathbf{X}^s,\mathbf{U}^s)$ given by
        \begin{equation}
            X^s_{\mathbf{U}^s}(k+1) = f(X^s_{\mathbf{U}^s}(k), U^s(k), W(k))
        \end{equation}
        with $\mathbf{U}^s \in \U^{\infty}(X^s(0))$ is called stationary for system \eqref{eq:stochSys} if 
        \begin{equation*}
            X^s(k) \sim P^s_X, \quad U(k) \sim P^s_U,\quad  (X^s(k),U^s(k)) \sim P^s_{X,U}
        \end{equation*}
        for all $k \in \N_0$. 
    \end{defn}

    Now, we can use these stationary solutions to define a turnpike property for stochastic optimal control problems regarding the distributional behavior of the optimal solutions.
    To this end, we use the standard push-forward measure $P_X : \mathcal{B}(\X) \rightarrow [0,1]$ of the random variable $X$ with $P_X(A) := \Prob(X^{-1}(A))$ for all $A \in \mathcal{B}(\X)$
    characterizing its distribution.

    \begin{defn}[Stochastic distributional turnpike] 
    \label{defn:TurnpikeDistr}
        Consider a stationary pair $(\mathbf{X}^s,\mathbf{U}^s)$ and a metric $d_D$ on the space $\mathcal{P}(\X)$ of probability measures on $\X$. We say that the optimal control problem \eqref{eq:stochOCP} has
        \begin{enumerate}[(i)]
            \item the finite horizon distributional turnpike property if for every $X_0 \in \XX$ there exists a $\vartheta \in \mathcal{L}$ such that for each optimal trajectory $X_{\mathbf{U}^*_N}(\cdot,X_0)$ and all $N,P \in \N$ there is a set $\mathcal{Q}(X_0,L,N) \subseteq \{0,\ldots,N\}$ with $\# \mathcal{Q}(X_0,P,N) \leq P$ elements and 
            \begin{equation*}
                d_D \left( P_{X_{\mathbf{U}^*_{N}}(k,X_0)} , P^s_X \right) \leq \vartheta(P)
            \end{equation*}
            for all $k \in \{0,\ldots,N\} \setminus \mathcal{Q}(X_0,L,N)$.
            \item the infinite horizon distributional turnpike property if for every $X_0 \in \XX$ there exists a $\vartheta_{\infty} \in \mathcal{L}$ such that for each optimal trajectory $X_{\mathbf{U}^*_{\infty}}(\cdot,X_0)$ and all $N,P \in \N$ there is a set $\mathcal{Q}(X_0,L,\infty) \subseteq \{0,\ldots,N\}$ with $\# \mathcal{Q}(X_0,L,\infty) \leq L$ elements and 
            \begin{equation*}
                d_D \left( P_{X_{\mathbf{U}^*_{\infty}}(k,X_0)} , P^s_X \right) \leq \vartheta_{\infty}(L)
            \end{equation*}
            for all $k \in \N_0 \setminus \mathcal{Q}(X_0,L,\infty)$.
        \end{enumerate}
    \end{defn}
    This turnpike property states that the distributions of the optimal trajectories are most of the time close to the distributions of a stationary stochastic process.
    This means that although the optimal solutions of problem \eqref{eq:stochOCP} are not stationary, for many time instants they are approximately stationary.
    Note that although the metric $d_D$ is arbitrary in the above definition, the exact choice will in general lead to stronger or weaker statements about the distributional behavior. It is also possible to define turnpike properties for stochastic systems, which are related to the pathwise or moment-based behavior of the solutions. However, since the value of the considered stage costs depends only on the distributions, the distributional turnpike is appropriate for our analysis. For more details on stochastic turnpikes and their connection to stochastic dissipativity notions, we refer to \cite{Schiessl2023c, Schiessl2023b}.

\subsection{Stochastic dynamic programming}

    Another important tool for our estimates is the dynamic programming principle (DPP). It states that minimizing the sum of the cost on a shorter horizon $M<N$ plus the optimal value on the remaining horizon yields the same optimal value as directly minimizing the cost on the whole horizon. The following two theorems formalize this principals on finite and infinite horizon.

    \begin{thm}[Finite horizon DPP] \label{thm:FiniteDPP}
       Consider the optimal control problem \eqref{eq:stochOCP} with $N \in \N$ and $X_0 \in \XX$.
       Let $\mathbf{U}^*_N \in \U^N(X_0)$ be an optimal control sequence on horizon $N$ and define $V_0 \equiv 0$. Then for all $N \in \N$ and all $M=1,\ldots,N$ it holds that 
       \begin{equation*}
           V_N(X_0) = \sum_{k=0}^{M-1} \ell(X_{\mathbf{U}^*_N}(k), U^*_N(k)) + V_{N-M}(X_{\mathbf{U}^*_N}(M)).
       \end{equation*}
    \end{thm}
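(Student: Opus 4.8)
The plan is to prove ``$\le$'' and ``$\ge$'' separately, both resting on one elementary bookkeeping identity. For a control sequence $\mathbf{U}=(U(0),U(1),\dots)$ write $\mathbf{U}_{[M]}:=(U(M),U(M+1),\dots)$ for its time shift by $M$. Iterating the one-step recursion \eqref{eq:stochSys} shows that, along any $\mathbf{U}$, the state $X_{\mathbf{U}}(M+j,X_0)$ coincides with the solution started at $X_{\mathbf{U}}(M,X_0)$ under $\mathbf{U}_{[M]}$; the relevant noise is then $(W(M),W(M+1),\dots)$, which is again i.i.d.\ with law $P_W$, and since $\ell(X,U)=\Exp{g(X,U)}$ depends on $(X,U)$ only through its distribution, this index shift changes neither the stage costs nor $V_{N-M}$. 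Because $J_N$ is additive over time, this yields, for every $\mathbf{U}\in\U^N(X_0)$ and every $M\in\{1,\dots,N\}$,
\begin{equation*}
  J_N(X_0,\mathbf{U}) = \sum_{k=0}^{M-1}\ell\big(X_{\mathbf{U}}(k,X_0),U(k)\big) + J_{N-M}\big(X_{\mathbf{U}}(M,X_0),\mathbf{U}_{[M]}\big).
\end{equation*}
For $M=N$ the last term is $J_0\equiv 0=V_0$ by convention, so that endpoint case is automatically included; and once the (sub)trajectory admissibility discussed below is in place, $V_{N-M}(X_{\mathbf{U}^*_N}(M))$ is finite — bounded below since $g$ is, and above by the finite tail cost of $\mathbf{U}^*_N$.

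For ``$\le$'': fix $\eps>0$ and choose $\mathbf{U}^+\in\U^{N-M}(X_{\mathbf{U}^*_N}(M))$ with $J_{N-M}(X_{\mathbf{U}^*_N}(M),\mathbf{U}^+)\le V_{N-M}(X_{\mathbf{U}^*_N}(M))+\eps$ (or a minimizer, if one exists). Concatenate, $\hat{\mathbf{U}}:=(U^*_N(0),\dots,U^*_N(M-1),U^+(0),U^+(1),\dots)$, and check $\hat{\mathbf{U}}\in\U^N(X_0)$: the first $M$ entries are $\F_k$-measurable because $\mathbf{U}^*_N$ is admissible, and — since the trajectory under $\hat{\mathbf{U}}$ agrees with the optimal one up to time $M$ and with the tail trajectory of $\mathbf{U}^+$ afterwards — the entry $\hat U(M+j)=U^+(j)$ is measurable w.r.t.\ $\sigma(X_{\mathbf{U}^*_N}(M),\dots,X_{\mathbf{U}^*_N}(M+j))\subseteq\F_{M+j}$. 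Applying the identity to $\hat{\mathbf{U}}$,
\begin{equation*}
  V_N(X_0)\le J_N(X_0,\hat{\mathbf{U}}) = \sum_{k=0}^{M-1}\ell\big(X_{\mathbf{U}^*_N}(k),U^*_N(k)\big) + J_{N-M}\big(X_{\mathbf{U}^*_N}(M),\mathbf{U}^+\big),
\end{equation*}
and the last term is $\le V_{N-M}(X_{\mathbf{U}^*_N}(M))+\eps$; letting $\eps\to 0$ gives ``$\le$''.

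For ``$\ge$'': apply the identity to $\mathbf{U}^*_N$ itself. Its tail $(\mathbf{U}^*_N)_{[M]}=(U^*_N(M),\dots,U^*_N(N-1))$ is a control for the tail problem started at $X_{\mathbf{U}^*_N}(M)$, hence its cost is $\ge V_{N-M}(X_{\mathbf{U}^*_N}(M))$, so
\begin{equation*}
  V_N(X_0)=J_N(X_0,\mathbf{U}^*_N)\ge \sum_{k=0}^{M-1}\ell\big(X_{\mathbf{U}^*_N}(k),U^*_N(k)\big) + V_{N-M}\big(X_{\mathbf{U}^*_N}(M)\big).
\end{equation*}
Combining the two inequalities proves the theorem.

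I expect the main obstacle to be the admissibility of the shifted tail in the ``$\ge$'' step, i.e.\ that $(\mathbf{U}^*_N)_{[M]}\in\U^{N-M}(X_{\mathbf{U}^*_N}(M))$: admissibility of $\mathbf{U}^*_N$ only guarantees $U^*_N(M+j)$ to be measurable w.r.t.\ $\F_{M+j}=\sigma(X_{\mathbf{U}^*_N}(0),\dots,X_{\mathbf{U}^*_N}(M+j))$, which is strictly larger than the filtration $\sigma(X_{\mathbf{U}^*_N}(M),\dots,X_{\mathbf{U}^*_N}(M+j))$ generated by the tail trajectory alone. The resolution is that the extra information in $\F_M$ beyond $X_{\mathbf{U}^*_N}(M)$ is independent of the future noise $(W(M),W(M+1),\dots)$ that drives the tail dynamics, so it cannot lower the attainable tail cost below $V_{N-M}(X_{\mathbf{U}^*_N}(M))$; this is made rigorous either by conditioning on $\F_M$ and a measurable selection, by a reduction to Markov (feedback) policies for which the restriction is immediate, or by adopting the equivalent convention that admissibility for the tail problem is taken w.r.t.\ the carried-over filtration $(\F_{M+j})_{j\ge 0}$. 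This is precisely where the stochastic DPP goes beyond its deterministic counterpart; the remaining work — time shifts, measurability of concatenations, the $M=N$ convention — is routine.
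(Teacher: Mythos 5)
Your proof is correct and follows essentially the route the paper itself takes: the paper gives no self-contained argument but establishes the theorem by citing the proofs of Bertsekas (Propositions~8.2 and 9.8) together with the Markov-policy representation of optimal controls (Altman, Theorem~6.2), and that representation is exactly the device you invoke to handle the one genuine difficulty you correctly isolate, namely that the tail $(U^*_N(M),U^*_N(M+1),\dots)$ is adapted to $\sigma(X(0),\dots,X(M+j))$ rather than to the filtration generated by the tail trajectory alone, so it is not obviously admissible for the problem started at $X_{\mathbf{U}^*_N}(M)$. Your explicit $\le$/$\ge$ bookkeeping (concatenation with a near-optimal tail for $\le$, tail suboptimality plus the Markov-policy/conditioning reduction for $\ge$) and the remark that the i.i.d.\ shift of the noise leaves $\ell$ and $V_{N-M}$ unchanged are precisely the routine steps the paper delegates to the references, so your write-up is, if anything, more detailed than the paper's own.
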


    \begin{thm}[Infinite horizon DPP] \label{thm:InfiniteDPP}
       Consider the optimal control problem \eqref{eq:stochOCP} with $N=\infty$, $X_0 \in \XX$, and $\vert V_{\infty}(X_0) \vert < \infty$. Let $\mathbf{U}^*_{\infty} \in \U^{\infty}(X_0)$ be an optimal control sequence on the infinite horizon. Then for all $M \in \N$ it holds that
       \begin{equation*}
           V_{\infty}(X_0) = \sum_{k=0}^{M-1} \ell(X_{\mathbf{U}^*_{\infty}}(k), U^*_{\infty}(k)) + V_{\infty}(X_{\mathbf{U}^*_{\infty}}(M)).
       \end{equation*}
    \end{thm}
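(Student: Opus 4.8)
The plan is the classical two--inequality proof of the dynamic programming principle. Throughout write $X^*(k) := X_{\mathbf{U}^*_\infty}(k)$ and $U^*(k) := U^*_\infty(k)$, and note that since $X_0 \in \XX$ and $\vert V_\infty(X_0)\vert < \infty$ each finite partial sum $\sum_{k=0}^{M-1}\ell(X^*(k),U^*(k))$ is finite, so the infinite--horizon cost splits at time $M$:
\[
V_\infty(X_0) = J_\infty(X_0,\mathbf{U}^*_\infty) = \sum_{k=0}^{M-1}\ell(X^*(k),U^*(k)) + \sum_{k=M}^{\infty}\ell(X^*(k),U^*(k)).
\]

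For ``$\geq$'' I would argue that the $M$--shifted control $\widetilde{\mathbf{U}} := (U^*(M),U^*(M+1),\dots)$, driven by the shifted noise $(W(M),W(M+1),\dots)$ --- which by the i.i.d.\ assumption has the same law and is independent of $X^*(M)$ --- reproduces the tail trajectory, so the second sum equals $J_\infty(X^*(M),\widetilde{\mathbf{U}})$. Granting that $\widetilde{\mathbf{U}} \in \U^\infty(X^*(M))$ (the delicate point, see below), the definition of the value function gives $J_\infty(X^*(M),\widetilde{\mathbf{U}}) \geq V_\infty(X^*(M))$, and together with the split this yields the inequality; it also shows $V_\infty(X^*(M)) < \infty$, while $V_\infty(X^*(M)) > -\infty$ follows from the optimality of $\mathbf{U}^*_\infty$ (otherwise one could undercut $V_\infty(X_0)$ by concatenation).

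For ``$\leq$'' I would fix $\eps > 0$, pick $\widehat{\mathbf{U}} \in \U^\infty(X^*(M))$ with $J_\infty(X^*(M),\widehat{\mathbf{U}}) \leq V_\infty(X^*(M)) + \eps$, and form $\mathbf{U}' \in \U^\infty(X_0)$ that uses $U^*(0),\dots,U^*(M-1)$ on the first $M$ steps and the time--shift of $\widehat{\mathbf{U}}$ afterwards. Since the first $M$ states and controls are unchanged one has $X_{\mathbf{U}'}(k)=X^*(k)$ for $k\leq M$ and, for $k\geq M$, the tail of $X_{\mathbf{U}'}$ coincides with the trajectory generated by $\widehat{\mathbf{U}}$ from $X^*(M)$; admissibility of $\mathbf{U}'$ is immediate here because $\sigma(X_{\mathbf{U}'}(M),\dots,X_{\mathbf{U}'}(M+j)) \subseteq \F_{M+j}$. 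Consequently
\[
V_\infty(X_0) \leq J_\infty(X_0,\mathbf{U}') = \sum_{k=0}^{M-1}\ell(X^*(k),U^*(k)) + J_\infty(X^*(M),\widehat{\mathbf{U}}) \leq \sum_{k=0}^{M-1}\ell(X^*(k),U^*(k)) + V_\infty(X^*(M)) + \eps,
\]
and $\eps \downarrow 0$ closes the argument. (Passing to the limit in Theorem~\ref{thm:FiniteDPP} is an alternative, but it requires separately proving $V_N \to V_\infty$ and is not obviously shorter.)

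The step I expect to be the main obstacle is verifying $\widetilde{\mathbf{U}} \in \U^\infty(X^*(M))$ in the ``$\geq$'' part: the $M$--shift of an admissible control is a priori only adapted to $\sigma(X^*(0),\dots,X^*(M+j)) = \F_{M+j}$, not to the natural filtration $\sigma(X^*(M),\dots,X^*(M+j))$ of the restarted process, which is in general strictly smaller. I would resolve this either by invoking the existence of an optimal Markov (feedback) policy for \eqref{eq:stochOCP} --- legitimate in the setting of \cite[Chapter~8]{Bertsekas1996b} already used for well-posedness, so that $U^*(M+j)$ may be taken $\sigma(X^*(M+j))$--measurable --- or by a conditioning argument on $\F_M$ showing that enlarging the decision filtration does not lower the attainable cost. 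The analogous but milder measurability issue in the ``$\leq$'' part, namely the measurable dependence of $\widehat{\mathbf{U}}$ on the random restart state $X^*(M)$, is handled by the standard measurable--selection theorems in the same framework.
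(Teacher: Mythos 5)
Your proposal is correct and follows essentially the same route as the paper: the paper does not spell out a proof but refers to the standard dynamic-programming arguments of \cite[Propositions~8.2 and 9.8]{Bertsekas1996b} combined with the representation of optimal controls by Markov policies \cite[Theorem~6.2]{Altman2021}, which is precisely the classical two-inequality argument you carry out and the same device you invoke to resolve the admissibility of the $M$-shifted control with respect to the restarted natural filtration. The delicate point you flag is indeed the crux, and your proposed resolution coincides with the paper's cited one.
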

    
    The theorems can be proven similarly to \cite[Proposition~8.2]{Bertsekas1996b} and \cite[Proposition~9.8]{Bertsekas1996b} by utilizing the fact that optimal control sequences can be represented by Markov policies, as shown in \cite[Theorem~6.2]{Altman2021}.
    For further details on dynamic programming we also refer to \cite{Bellman1966, Bertsekas1996a}.

\section{STOCHASTIC MPC-ALGORITHMS}\label{sec:3}

    Next, we introduce the stochastic MPC algorithm under consideration. Here we distinguish between two formulations---one for practical application and one for analytical purposes---whose relation we clarify in Corollary \ref{cor:mpcalg}, below.
    Algorithm~\ref{alg:stochMPC1} formulates an MPC scheme that works directly on the random variables, where we get the next state by evaluating the system dynamics \eqref{eq:stochSys} using the calculated feedback law $\mu_N$.

    \begin{algorithm}
        \caption{Stochastic MPC Algorithm (Version 1)}\label{alg:stochMPC1}
        \begin{algorithmic}
            \For{$j=0,\ldots,K$}
                \State 1.) Set $X_0 = X(j) \in \XX$.
                \State 2.) Solve the stochastic optimal control problem \eqref{eq:stochOCP}. 
                \State 3.) Apply the MPC feedback $\mu_N(X(j)) := U^*_{N,X_0}(0)$ 
                \State ~~~ to system \eqref{eq:stochSys} and get the next state $X(j+1)$.
            \EndFor
        \end{algorithmic}
    \end{algorithm}

    While we will use Algorithm~\ref{alg:stochMPC1} for our theoretical investigations, it is difficult to use it for practical purposes. The main difficulty is that we cannot measure the random variable $X(k)$ online at every step. What we can measure, however, is the realization $X(k,\omega)$ of a random variable, which leads to Algorithm~\ref{alg:stochMPC2}.

    \begin{algorithm}
        \caption{Stochastic MPC Algorithm (Version 2)}\label{alg:stochMPC2}
        \begin{algorithmic}
            \For{$j=0,\ldots,K$}
                \State 1.) Measure the state $x_j := X(j,\omega)$ and set $X_0\equiv x_j$.
                \State 2.) Solve the stochastic optimal control problem \eqref{eq:stochOCP}. 
                \State 3.) Apply the MPC feedback $\mu_N(x_j) := U^*_{N,x_j}(0)$
                \State ~~~ to system \eqref{eq:stochSys}.
            \EndFor
        \end{algorithmic}
    \end{algorithm}

   The question we need to address is how the feedback laws computed by the two algorithms are related, and whether a performance bound for Algorithm~\ref{alg:stochMPC1} would also hold for Algorithm~\ref{alg:stochMPC2}. The answer to this question relies on the following theorem, which shows that we can construct a sequence of feedback laws that is optimal for all random variable  initial values $X$ by pointwise minimization for all $x \in \X$.

    \begin{thm}[{\cite[Proposition~8.5]{Bertsekas1996b}}]
    \label{thm:PointwiseInf}
        Assume that for $k=0,\ldots,N-1$ the infimum in 
        \begin{equation} \label{eq:DPPpath}
            \inf_{u_k \in \U} \{ g(x_k,u_k) + V_{N-k-1}(f(x_k,u_k,W)) \}
        \end{equation}
        is achieved for each $x_k \in \X$ and define $\pi_k^*(x_k):=u_k^*$ for each $x_k\in\X$, where the $u_k^*\in\U$ form a measurable selection of minimizers of \eqref{eq:DPPpath}. Then $\boldsymbol{\pi}^* := (\pi_0^*,\ldots,\pi_{N-1}^*)$ defines a sequence of universally measurable functions and satisfies
        \begin{equation*}
            \inf_{\pi_k \in \Pi} \{ \Exp{g(X,\pi_k(X) + V_{N-k-1}(f(X,\pi_k(X),W))} \}
        \end{equation*}
        for all $X \in \XX$ and $k=0,\ldots,N-1$, where $\Pi$ is the set of all universally measurable functions $\pi: \X \rightarrow \U$.
    \end{thm}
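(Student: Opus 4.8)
The plan is to split the statement into two essentially independent pieces: a \emph{measurability} assertion---that the pointwise selection $\boldsymbol{\pi}^*=(\pi_0^*,\dots,\pi_{N-1}^*)$ consists of universally measurable maps---and an \emph{optimality} assertion---that a feedback minimizing the Bellman expression \eqref{eq:DPPpath} for every realization $x\in\X$ also minimizes the associated expected value for every random-variable initial condition $X\in\XX$. The second piece will follow from monotonicity of the expectation once the first has put the relevant $\omega$-dependent quantities on a sound measure-theoretic footing, so the weight of the argument lies in the measurability step, which is exactly the content of \cite[Ch.~7]{Bertsekas1996b}.

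For the measurability step I would argue by backward induction on the remaining horizon $m$ that $V_m$, viewed as a function of a deterministic (i.e.\ $\Prob$-a.s.\ constant) initial condition $x\in\X$---equivalently the realization-level value function $v_m$---is lower semianalytic, with $V_0\equiv 0$. In the induction step, $g$ is continuous and hence lower semianalytic, $f$ is continuous, and $W$ has the fixed distribution $P_W$ (so that $W$ can be integrated out, preserving lower semianalyticity as in \cite[Prop.~7.48]{Bertsekas1996b}); thus $(x,u)\mapsto g(x,u)+V_m(f(x,u,W))$ is lower semianalytic, and taking the partial infimum over $u\in\U$ preserves this property and produces $V_{m+1}$. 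Since by hypothesis the infimum in \eqref{eq:DPPpath} is attained at each $x_k$, the Jankov--von Neumann measurable selection theorem (as used for \cite[Prop.~7.50]{Bertsekas1996b}) provides a universally measurable selector $\pi_k^*$ of minimizers; concatenating over $k$ yields the universally measurable $\boldsymbol{\pi}^*$. This bookkeeping---keeping every object inside the class of lower semianalytic functions so that a universally measurable minimizer exists---is the main obstacle.

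For the optimality step, fix $k\in\{0,\dots,N-1\}$, an arbitrary $\pi_k\in\Pi$, and $X\in\XX$. By the definition of $\pi_k^*$ as a pointwise minimizer of \eqref{eq:DPPpath}, for every $x\in\X$
\[
 g(x,\pi_k(x))+V_{N-k-1}(f(x,\pi_k(x),W))\ \ge\ g(x,\pi_k^*(x))+V_{N-k-1}(f(x,\pi_k^*(x),W)).
\]
Both sides, as functions of $x$, are universally measurable (compositions of universally measurable maps with the lower semianalytic $V_{N-k-1}$), hence integrable with respect to the push-forward measure $P_X$ after completion; moreover $g$ is bounded from below and $V_{N-k-1}>-\infty$, so no $\infty-\infty$ ambiguity occurs and the expectations below are well defined. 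Integrating the inequality against $P_X$, equivalently taking $\Exp{\cdot}$, gives
\[
 \Exp{g(X,\pi_k(X))+V_{N-k-1}(f(X,\pi_k(X),W))}\ \ge\ \Exp{g(X,\pi_k^*(X))+V_{N-k-1}(f(X,\pi_k^*(X),W))},
\]
and since $\pi_k\in\Pi$ was arbitrary, $\pi_k^*$ attains $\inf_{\pi_k\in\Pi}\Exp{g(X,\pi_k(X))+V_{N-k-1}(f(X,\pi_k(X),W))}$, which is the assertion.

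Finally, I would note an equivalent route that makes the link to the cited result more transparent: first prove the identity $V_m(Y)=\Exp{v_m(Y)}$ for all $Y\in\XX$, with $v_m:\X\to\R$ the realization-level value function defined by the pointwise Bellman recursion, and then read off the assertion directly from the pointwise dynamic programming principle. This identity, however, rests on the same measurable-selection machinery, so it does not circumvent the obstacle above; in either formulation the technical heart is \cite[Prop.~8.5]{Bertsekas1996b}.
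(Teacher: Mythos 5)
The paper offers no proof of this statement at all: it is imported verbatim from Bertsekas and Shreve (Proposition~8.5), so the ``paper's proof'' is the citation itself, and your reconstruction --- backward induction keeping the value functions lower semianalytic, a Jankov--von Neumann universally measurable selection of pointwise minimizers, and integration of the pointwise Bellman inequality against $P_X$ --- is precisely the machinery of that cited reference. Hence your proposal is correct and takes essentially the same approach as the (implicit) proof the paper relies on.
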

    
    Observe that $\mu_N$ from Algorithm 2 is a pointwise minimizer of \eqref{eq:DPPpath} for $k=0$, hence if the selection of the minimizer is measurable in Algorithm 1 (which is, for instance, the case when the minimizer is unique for each $x_j$), then $\pi_0^*$ can be chosen as $\mu_N$ from Algorithm 2. Hence, Theorem~\ref{thm:PointwiseInf} can be used to argue that the pointwise feedback law computed by Algorithm~\ref{alg:stochMPC2} is also optimal on the space of random variables. Yet, a measure-theoretic problem remains. The problem is that the functions $\pi^*_k$ from Theorem~\ref{thm:PointwiseInf} are only universally measurable, and thus the composition $\pi^*_K(X(k))$ does not necessarily have to be measurable in the Borel sense. This means that the control $U(k) := \pi^*_k(X(k))$ is not a random variable and therefore not admissible for the problem \eqref{eq:stochOCP}. However, the next theorem shows that for any initial condition $X$ we can find another feedback sequence, which almost surely coincides with the one in Theorem~\ref{thm:PointwiseInf} and defines admissible control values for the original problem.

    \begin{thm}
    \label{thm:FeedbackAlmostSure}
        Consider the optimal sequence $\boldsymbol{\pi}^* := (\pi_0^*,\ldots,\pi_{N-1}^*)$ of uniformly measurable functions from Theorem~\ref{thm:PointwiseInf}.
        Then for all state processes $\mathbf{X} := (X(0),\ldots,X(N))$ there exists a sequence of measurable functions $\boldsymbol{\tilde{\pi}}^* := (\tilde{\pi}_0^*,\ldots,\tilde{\pi}_{N-1}^*)$ such that $\pi_k^* = \tilde{\pi}_k^*$ holds $P_{X(k)}$-almost surely and 
        \begin{equation*}
        \begin{split}
            &V_{N-k}(X(k))\\
            &=\ell(X(k),\tilde{\pi}_k^*(X(k)) + V_{N-k-1}(f(X(k),\tilde{\pi}_k^*(X(k)),W)) \\
            &= \ell(X(k),\pi_k^*(X(k)) + V_{N-k-1}(f(X(k),\pi_k^*(X(k)),W)) 
        \end{split}
        \end{equation*}
        holds for all $k=0,\ldots,N$.
    \end{thm}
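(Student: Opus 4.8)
The plan is to construct the $\tilde{\pi}_k^*$ one index at a time, for $k=0,\ldots,N-1$, by invoking the classical measure-theoretic fact that every universally measurable map from a Borel space into a Borel space coincides, up to a null set for any prescribed Borel probability measure, with a Borel-measurable map (see, e.g., \cite{Bertsekas1996b}). Concretely, for the given state process $\mathbf{X}$ and each $k$, I would apply this fact with the push-forward measure $\mu := P_{X(k)}$ on $\X$: since $\pi_k^*$ is universally measurable it is measurable with respect to the $\mu$-completion of $\B{\X}$, so there exist a Borel set $N_k \in \B{\X}$ with $\mu(N_k)=0$ and a Borel-measurable $\tilde{\pi}_k^*:\X\to\U$ with $\pi_k^*(x)=\tilde{\pi}_k^*(x)$ for all $x\in\X\setminus N_k$ (one redefines $\pi_k^*$ on $N_k$ via a fixed Borel selection). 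By construction $\pi_k^*=\tilde{\pi}_k^*$ holds $P_{X(k)}$-almost surely; and since $X(k)$ is $\F_k$-measurable and $\tilde{\pi}_k^*$ is Borel, the composition $\tilde{\pi}_k^*(X(k))$ is a genuine $\F_k$-measurable random variable, so the resulting control values are admissible for \eqref{eq:stochOCP}.

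It then remains to transfer the dynamic programming identities. The last displayed line, $V_{N-k}(X(k)) = \ell(X(k),\pi_k^*(X(k))) + V_{N-k-1}(f(X(k),\pi_k^*(X(k)),W))$, is exactly the pointwise Bellman identity provided by Theorem~\ref{thm:PointwiseInf}, i.e. the pointwise minimizer attains the value function (using the finite-horizon DPP, Theorem~\ref{thm:FiniteDPP}). For the middle line, I would observe that replacing $\pi_k^*$ by $\tilde{\pi}_k^*$ alters the random variable $\pi_k^*(X(k))$ only on the event $\{X(k)\in N_k\}$, which has $\Prob$-probability $P_{X(k)}(N_k)=0$. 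Because $W$ is independent of $X(k)$, the joint law of $(X(k),\tilde{\pi}_k^*(X(k)),W)$ equals that of $(X(k),\pi_k^*(X(k)),W)$; as $\ell(X,U)=\Exp{g(X,U)}$ depends only on the joint distribution of $(X,U)$, and $V_{N-k-1}$ evaluated at $f(X(k),\cdot,W)$ depends only on the distribution of that random variable, both summands — hence the whole right-hand side — are unchanged. This gives equality of the middle and last lines, and combined with the previous observation yields all three equalities for $k=0,\ldots,N-1$ (the index $k=N$ being degenerate, with $V_0\equiv 0$).

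The main obstacle is the measure-theoretic core: making precise that universal measurability is exactly what is needed so that, after fixing a single measure $P_{X(k)}$ at a time, a Borel representative exists — this should be stated carefully and cited rather than reproven. A secondary point worth a remark, relevant when the $\tilde{\pi}_k^*$ are later used to \emph{generate} the closed loop rather than applied to a pre-given $\mathbf{X}$, is that the almost-sure agreement propagates: since $f(X(k),\tilde{\pi}_k^*(X(k)),W)=f(X(k),\pi_k^*(X(k)),W)$ $\Prob$-almost surely, the state distributions at step $k+1$ coincide, so the construction can be iterated without the exceptional sets accumulating.
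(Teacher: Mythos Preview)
Your proposal is correct and follows essentially the same route as the paper: both obtain the Borel representative $\tilde\pi_k^*$ from the standard fact that a universally measurable map agrees with a Borel map off a null set for any fixed measure (the paper cites \cite[Lemma~1.2]{Crauel2002}, you cite \cite{Bertsekas1996b}), and both deduce the cost identities from the fact that modifying on a $P_{X(k)}$-null set does not change the expectations. The only cosmetic difference is that for the equality with $V_{N-k}(X(k))$ the paper invokes the characterization of admissible controls as measurable feedbacks \cite[Lemma~1.14]{Kallenberg2021}, whereas you invoke the pointwise Bellman identity from Theorem~\ref{thm:PointwiseInf} directly; these amount to the same argument.
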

    \begin{proof}
        The existence of the sequence $\boldsymbol{\tilde{\pi}}^*$ follows from \cite[Lemma~1.2]{Crauel2002}. 
        Further, the equality of the costs for $\pi_k^*$ and $\tilde{\pi}_k^*$ follows since the functions only differ on $P_{X(k)}$-null sets and the equality to the optimal value function $V_{N-k}(X(k))$ is a consequence of the fact, that a control sequence $\mathbf{U} \in \U^{N-k}$ is admissible if and only if there is a sequence $\boldsymbol{\pi} := (\pi_0,\ldots,\pi_{N-1})$ of measurable functions $\pi_k: \X \rightarrow \U$ such that $U(k) = \pi_k(X(k))$ for all $k=0,\ldots,N-1$, see \cite[Lemma~1.14]{Kallenberg2021}.
        For more details we also refer to \cite{Bertsekas1996b}.
    \end{proof}

    Theorem~\ref{thm:FeedbackAlmostSure} implies that $\tilde \pi_0^*$ coincides almost surely with the first element of a measurable sequence of optimal feedback laws for optimal control problem \eqref{eq:stochOCP} with initial condition $X(0)$ and is thus one of the possible outcomes for $\mu_N$ from Algorithm~\ref{alg:stochMPC1}. Hence, under the assumptions that the selection of the feedback value $\mu_N$ in Algorithm~\ref{alg:stochMPC2} is measurable, the feedbacks from the two algorithms coincide $P_{X(j)}$-almost surely. This immediately yields the following corollary, in which we measure the closed-loop performance of a measurable feedback law $\mu:\RR{\Omega,\X}\to\RR{\Omega,\U}$ over a horizon $K$ by
    \[ J^{cl}_K(X_0, \mu) := \sum_{k=0}^{K-1} \ell(X_{\mu}(k,X_0),\mu(X_{\mu}(k,X_0))). \]
    \begin{cor}
    Consider $N,j \in \N$ and let $\mu_N^2$ be a measurable feedback law from Algorithm~\ref{alg:stochMPC2} for $x_j=X(j,\omega)$. Then $\mu_N^2$ coincides $P_{X(j)}$-almost surely with a feedback law $\mu_N^1$ from Algorithm~\ref{alg:stochMPC1} and the identity 
    \[  J^{cl}_K(X_0, \mu_N^1) =  J^{cl}_K(X_0, \mu_N^2) \]
    holds for all $X_0\in\RR{\Omega,\X}$.
        \label{cor:mpcalg}
    \end{cor}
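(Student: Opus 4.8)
The plan is to prove the two assertions of Corollary~\ref{cor:mpcalg} in turn. The first one---that $\mu_N^2$ agrees $P_{X(j)}$-almost surely with some feedback law $\mu_N^1$ produced by Algorithm~\ref{alg:stochMPC1}---is essentially contained in the discussion preceding the statement, while the second one---equality of the closed-loop costs---will follow from an induction along the MPC loop combined with the fact that the stage cost $\ell(X,U)=\Exp{g(X,U)}$ is insensitive to modifications of its arguments on $\Prob$-null sets.

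For the first assertion, observe that $\mu_N^2(x_j)$ is by construction a pointwise minimizer of \eqref{eq:DPPpath} for $k=0$ with $x_k=x_j$, and that by hypothesis this selection is measurable; hence $\mu_N^2$ may be identified with the map $\pi_0^*$ furnished by Theorem~\ref{thm:PointwiseInf}. Applying Theorem~\ref{thm:FeedbackAlmostSure} to the state process started at $X(j)$ produces a Borel-measurable $\tilde\pi_0^*:\X\to\U$ with $\tilde\pi_0^*=\pi_0^*=\mu_N^2$ $P_{X(j)}$-almost surely that is admissible for \eqref{eq:stochOCP} with initial condition $X(j)$ and attains $V_N(X(j))$. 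Thus $X\mapsto\tilde\pi_0^*(X)$ is an admissible realization of step~3 of Algorithm~\ref{alg:stochMPC1}, i.e.\ a legitimate choice of $\mu_N^1$, and the two feedbacks coincide $P_{X(j)}$-almost surely.

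For the second assertion, fix $X_0\in\RR{\Omega,\X}$ and run both closed loops driven by the same fixed noise process $\mathbf W$. I claim $X_{\mu_N^1}(j,X_0)=X_{\mu_N^2}(j,X_0)$ $\Prob$-almost surely for every $j=0,\dots,K$. This is trivial for $j=0$, and if it holds for some $j$, put $X(j):=X_{\mu_N^1}(j,X_0)=X_{\mu_N^2}(j,X_0)$. Since $\mu_N^1=\mu_N^2$ off a $P_{X(j)}$-null set, we get $\mu_N^1(X(j))=\mu_N^2(X(j))$ $\Prob$-almost surely, so $X_{\mu_N^1}(j+1,X_0)$ and $X_{\mu_N^2}(j+1,X_0)$, both obtained by applying $f(X(j),\,\cdot\,,W(j))$ to almost surely equal controls, agree $\Prob$-almost surely; this closes the induction. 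Consequently, for each $j$ the pairs $(X_{\mu_N^1}(j,X_0),\mu_N^1(X_{\mu_N^1}(j,X_0)))$ and $(X_{\mu_N^2}(j,X_0),\mu_N^2(X_{\mu_N^2}(j,X_0)))$ coincide $\Prob$-almost surely and hence yield the same stage cost $\ell$; summing over $j=0,\dots,K-1$ gives $J^{cl}_K(X_0,\mu_N^1)=J^{cl}_K(X_0,\mu_N^2)$.

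The step I expect to require the most care is ensuring that the almost sure coincidence of the feedbacks is available not only for the given $X(j)$ but with respect to the law of the current closed-loop state at \emph{every} iteration of the induction, since the null set on which the universally measurable $\pi_0^*$ is corrected to the Borel map $\tilde\pi_0^*$ in Theorem~\ref{thm:FeedbackAlmostSure} depends on that law. I would handle this by constructing the closed loop of Algorithm~\ref{alg:stochMPC1} recursively: given $X_{\mu_N^1}(0,X_0),\dots,X_{\mu_N^1}(j,X_0)$, invoke Theorem~\ref{thm:FeedbackAlmostSure} for that finite state process to select a Borel feedback agreeing with $\pi_0^*$ on the relevant distribution, and use it to define $X_{\mu_N^1}(j+1,X_0)$; the induction above then only ever evaluates feedbacks on states whose law is exactly the one for which the coincidence was secured. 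This also disposes of the measurability obstruction mentioned before the corollary, namely that the raw pointwise minimizer is only universally, not Borel, measurable.
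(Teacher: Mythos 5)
Your proposal is correct and follows essentially the same route as the paper, which states the corollary as an immediate consequence of Theorems~\ref{thm:PointwiseInf} and~\ref{thm:FeedbackAlmostSure}: the two feedbacks coincide $P_{X(j)}$-almost surely, and since $\ell(X,U)=\Exp{g(X,U)}$ is insensitive to modifications on $\Prob$-null sets, the closed-loop costs agree. Your additional induction along the loop and the per-step invocation of Theorem~\ref{thm:FeedbackAlmostSure} to align the null sets with the law of the current closed-loop state merely make explicit the bookkeeping the paper leaves implicit.
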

    
    \begin{Bem}
    (i) Corollary \ref{cor:mpcalg} states that the estimates we will derive in the remainder of this paper for the MPC closed-loop trajectory generated by the theoretical MPC Algorithm~\ref{alg:stochMPC1} will also be valid in expectation for the MPC closed-loop trajectories generated by the practically implementable MPC Algorithm~\ref{alg:stochMPC2}. As we do not assume uniqueness of the optimal controls, Algorithm~\ref{alg:stochMPC1} may not yield a unique MPC closed-loop trajectory. However, the estimates we derive below are valid for all possible closed-loop trajectories.

    (ii) In practice, only finitely many evaluations of $\mu_N=\mu_N^2$ will be performed in Algorithm~\ref{alg:stochMPC2}. Since a finite selection of values is always measurable, the requirement of $\mu_N^2$ being measurable does not have any practical implications when running Algorithm~\ref{alg:stochMPC2}.
    \end{Bem}

\section{PERFORMANCE ESTIMATES}\label{sec:4}

    This section contains our main theoretical results. We derive different types of estimates for the closed-loop performance of Algorithm~\ref{alg:stochMPC1}. To this end, we first state a couple of useful definitions for our analysis.

    \begin{defn}[Shifted cost] \label{defn:ShiftedCost}
        Let $(\mathbf{X}^s,\mathbf{U}^s)$ be a stationary pair with constant stage cost $\ell(\mathbf{X}^s,\mathbf{U}^s) := \ell(X^s(k),U^s(k))$ for all $k \in \N_0$. We define the shifted stage cost as 
        \begin{equation*}
            \hat{\ell}(X,U) := \ell(X,U) - \ell(\mathbf{X}^s,\mathbf{U}^s).
        \end{equation*}
        Moreover, for all $N \in \N \cup \{\infty\}$ we define the shifted cost functional as $\hat{J}_N(X_0,\mathbf{U}) := \sum_{k=0}^{N-1} \hat{\ell}(X(k),U(k))$
        and the corresponding optimal value function is given by $\hat{V}_N(X_0) := \inf_{\mathbf{U} \in \U^{N}(X_0)} \hat{J}_N(X_0,\mathbf{U})$.
    \end{defn}

    \begin{defn}[Optimal operation]
    \label{defn:OptOperated}
        The stochastic optimal control problem \eqref{eq:stochOCP} is called optimally operated at $(\mathbf{X}^s,\mathbf{U}^s)$ if 
        \begin{equation*}
            \liminf_{K \rightarrow \infty} \sum_{k=0}^{K-1} \Big( \ell(X_{\mathbf{U}}(k,X_0), U(k)) - \ell(\mathbf{X}^s,\mathbf{U}^s)) \Big) \geq 0
        \end{equation*}
        holds for all $X_0$ and $\mathbf{U} \in \U^{\infty}(X_0)$.
    \end{defn}

    \begin{defn}[Continuity of the optimal value function] 
    \label{defn:Continuity}
        Consider a stationary pair $(\mathbf{X}^s,\mathbf{U}^s)$ and a metric $d_D$ on the space of probability measures $\mathcal{P}(\X)$. Then we say that $\hat{V}$ is approximately distributional continuous at $P^s_X$ 
        \begin{enumerate}[(i)]
            \item on finite horizon if there exists a function $\gamma_V : \R_0^+ \times \R_0^+ \rightarrow \R_0^+$ with $\gamma_V(N,r) \to 0$ if $N \to \infty$ and $r \to 0$, and $\gamma_V(\cdot,r)$, $\gamma_V(N,\cdot)$ monotonous for fixed $r$ and $N$ such that for each $k \in \N_0$ and there is an open ball
            \[B_{\eps}(P^s_X) := \lbrace X: \Omega \rightarrow \R^n \mid d_D \left( P_X, P^s_X \right) < \eps \rbrace\]
            such that for all $X \in B_{\eps}(P^s_X)$, 
            $N \in \N$ 
            it holds that 
            \begin{equation*}
                \vert \hat{V}_N(X) - \hat{V}_N(\mathbf{X}^s) \vert \leq \gamma_V \left(N, d_D \left( P_X, P^s_X \right)\right).
            \end{equation*}
            \label{defn:ContinuityFinite}       
            \item on infinite horizon if there exists a function $\gamma_{V_\infty} \in \K_{\infty}$ such that for all $k \in \N_0$ and $\eps > 0$ there is an open ball $B_{\eps}(P^s_X)$ around $P^s_X$ such that for all $X \in B_{\eps}(P^s_X)$ 
            it holds that
            \begin{equation*}
                \vert \hat{V}_{\infty}(X) - \hat{V}_{\infty}(\mathbf{X}^s) \vert \leq \gamma_{V_\infty}\left(d_D \left( P_X, P^s_X \right)\right).
            \end{equation*} 
            \label{defn:ContinuityInfinite}  
        \end{enumerate}
    \end{defn}
    
    \begin{ass} 
    \label{ass:Asumptions1}
        We assume that there is a stationary pair $(\mathbf{X}^s,\mathbf{U}^s)$  with $\vert \ell(\mathbf{X}^s,\mathbf{U}^s)) \vert < \infty$ such that the stochastic optimal control problem \eqref{eq:stochOCP} has the following properties:
        \begin{enumerate}[(i)]
            \item It is optimally operated at $(\mathbf{X}^s,\mathbf{U}^s)$ according to Definition~\ref{defn:OptOperated}. 
            \item It has the finite and infinite distributional turnpike property from Definition~\ref{defn:TurnpikeDistr} at $(\mathbf{X}^s,\mathbf{U}^s)$ with respect to a metric $d_D$ on $\mathcal{P}(\X)$.
            \item It has a shifted value function which is approximately distributional continuous at $\mathbf{X}^s$ on finite and infinite horizon according to Definition~\ref{defn:Continuity} with respect to the same metric $d_D$.
        \end{enumerate}
    \end{ass}

    The above conditions are standard assumptions for analyzing MPC schemes, cf.\ \cite{Gruene2017, Gruene2013}, adapted to our stochastic setting. In practice, the turnpike property can be verified via numerical simulations or by a dissipativity-based analysis. In the deterministic case, under appropriate controllability assumptions it is possible to infer the optimal operation and the continuity of the optimal value function from the turnpike property, see \cite{Gruene2013}. We conjecture that this can also be done in our stochastic setting, however, the technical details of this are beyond the scope of this paper.
    The following lemma shows an important consequence of Assumption \ref{ass:Asumptions1}.

    \begin{lem} \label{lem:finiteV}
        Under Assumption \ref{ass:Asumptions1}, for all $X \in \XX$ it holds that $\vert \hat{V}_{\infty}(X) \vert < \infty$.
    \end{lem}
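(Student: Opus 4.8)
The plan is to establish the two-sided estimate $0 \le \hat V_\infty(X) < \infty$. The lower bound follows at once from optimal operation: for every admissible $\mathbf U \in \U^{\infty}(X)$, Assumption~\ref{ass:Asumptions1}(i) gives $\liminf_{K\to\infty}\hat J_K(X,\mathbf U) = \liminf_{K\to\infty}\sum_{k=0}^{K-1}\big(\ell(X_{\mathbf U}(k,X),U(k)) - \ell(\mathbf X^s,\mathbf U^s)\big) \ge 0$, hence $\hat J_\infty(X,\mathbf U) \ge 0$ and therefore $\hat V_\infty(X) \ge 0$. The same reasoning applied at the stationary pair, together with the observation that the stationary control gives $\hat J_\infty(X^s(0),\mathbf U^s) = \sum_{k=0}^{\infty}\big(\ell(X^s(k),U^s(k)) - \ell(\mathbf X^s,\mathbf U^s)\big) = 0$, yields $\hat V_\infty(\mathbf X^s) = 0$; in particular $\hat V_\infty$ is finite at the turnpike.

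For the upper bound at a general $X \in \XX$ the idea is to steer the state distribution into the neighbourhood of $P^s_X$ on which $\hat V_\infty$ is controlled, reaching that neighbourhood in finitely many steps at finite cost by using the turnpike of the \emph{finite}-horizon optimal trajectories. Let $\eps_0 > 0$ be such that the estimate in Definition~\ref{defn:Continuity}(ii) holds on $B_{\eps_0}(P^s_X)$, and pick $P \in \N$ with $\vartheta(P) < \eps_0$, which is possible since $\vartheta \in \mathcal{L}$. Put $N_0 := P+1$ and let $X_{\mathbf U^*_{N_0}}(\cdot,X)$ be an optimal trajectory on horizon $N_0$. By the finite-horizon turnpike property (Assumption~\ref{ass:Asumptions1}(ii)) the exceptional set $\mathcal{Q}(X,P,N_0)$ contains at most $P$ of the $N_0$ indices in $\{0,\dots,N_0-1\}$, so there is an index $k^* \in \{0,\dots,N_0-1\}$ with $d_D\big(P_Y,P^s_X\big) \le \vartheta(P) < \eps_0$, where $Y := X_{\mathbf U^*_{N_0}}(k^*,X)$. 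Since $V_{N_0}(X)$ is finite and $g$, hence $\hat\ell$, is bounded from below, every stage cost along this optimal trajectory is finite, so the prefix cost $\sum_{k=0}^{k^*-1}\hat\ell\big(X_{\mathbf U^*_{N_0}}(k,X),U^*_{N_0}(k)\big)$ is a finite sum of real numbers, and $Y \in \XX$ because the optimal trajectory stays in the constraint set.

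Concatenating the first $k^*$ controls of $\mathbf U^*_{N_0}$ with an arbitrary admissible infinite-horizon control issued from $Y$ produces a control that is admissible for the infinite-horizon problem started at $X$: the continuation is adapted to a sub-$\sigma$-algebra of the natural filtration, and since $\mathbf W$ is i.i.d.\ the continuation problem is a copy of the infinite-horizon problem with initial value $Y$. Passing to the infimum over continuations gives, by the concatenation argument used for Theorem~\ref{thm:InfiniteDPP} (applied to the shifted cost; the inequality ``$\le$'' holds for $\hat V_\infty$ without assuming finiteness a priori), the bound
\begin{equation*}
  \hat V_\infty(X) \le \sum_{k=0}^{k^*-1}\hat\ell\big(X_{\mathbf U^*_{N_0}}(k,X),U^*_{N_0}(k)\big) + \hat V_\infty(Y).
\end{equation*}
For the last term, $d_D(P_Y,P^s_X) < \eps_0$, Definition~\ref{defn:Continuity}(ii) and $\hat V_\infty(\mathbf X^s)=0$ give $\hat V_\infty(Y) \le \gamma_{V_\infty}(\vartheta(P)) < \infty$, whence together with the first paragraph
\begin{equation*}
  0 \le \hat V_\infty(X) \le \sum_{k=0}^{k^*-1}\hat\ell\big(X_{\mathbf U^*_{N_0}}(k,X),U^*_{N_0}(k)\big) + \gamma_{V_\infty}(\vartheta(P)) < \infty,
\end{equation*}
which is the assertion.

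I expect the delicate point to be the concatenation step rather than the overall logic: one has to check that the stitched control satisfies the causality requirement~\eqref{eq:Filtration}, that $Y$ (and the states visited before it) genuinely lie in $\XX$ so that $\hat V_\infty(Y)$ is defined and Definition~\ref{defn:Continuity}(ii) applies, and that the infimum over continuations really equals $\hat V_\infty(Y)$ even though the disturbance is only shifted and not reset. These are handled with the measurable-selection and almost-sure-representative arguments already used around Theorem~\ref{thm:FeedbackAlmostSure} and Corollary~\ref{cor:mpcalg} together with the i.i.d.\ property of $\mathbf W$; a secondary, purely notational issue is the convention for the infinite sum defining $\hat J_\infty$, for which the lower-bound argument only needs $\hat J_\infty(X,\mathbf U) \ge \liminf_{K}\hat J_K(X,\mathbf U)$.
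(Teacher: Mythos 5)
Your proposal is correct and follows essentially the same route as the paper's proof: use the finite-horizon turnpike to reach a time where the optimal trajectory's distribution lies in the continuity neighbourhood of $P^s_X$, use $\hat V_\infty(\mathbf X^s)=0$ (from optimal operation) together with Definition~\ref{defn:Continuity}(ii) to bound $\hat V_\infty$ there by $\gamma_{V_\infty}$, and bound $\hat V_\infty(X)$ by the finite prefix cost plus that value via the dynamic-programming/concatenation inequality. You are merely more explicit than the paper about the lower bound $\hat V_\infty(X)\ge 0$ and the admissibility of the stitched control, which the paper leaves implicit in its ``by optimality'' step.
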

    \begin{proof}
        Consider $\eps$ from the continuity property on infinite horizon from Definition~\ref{defn:Continuity}(\ref{defn:ContinuityInfinite}) and pick $L \in \N$ such that $\vartheta(L) < \eps$ with $\vartheta \in \mathcal{L}$ from Definition~\ref{defn:TurnpikeDistr}. 
        Then we can conclude by the distributional turnpike property that for all $N > L$ there is $M \leq N$ such that $X_{\mathbf{U}^*_{N}}(M,X) \in B_{\eps}(P^s_X)$.
        Hence, we obtain by the continuity property that 
        \begin{equation*}
        \begin{split}
            \vert \hat{V}_{\infty}(&X_{\mathbf{U}^*_{N}}(M,X)) - \hat{V}_{\infty}(\mathbf{X}^s) \vert \\
            &\leq \gamma_{V_\infty}\left(d_D ( P_{X_{\mathbf{U}^*_{N}}(M,X)}, P^s_X )\right) < \gamma_{V_\infty}(\eps).
        \end{split}
        \end{equation*}
        Moreover, from the optimal operation we can conclude that $\hat{V}(\mathbf{X}^s) = 0$ and, thus, $\vert \hat{V}_{\infty}(X_{\mathbf{U}^*_{N}}(M,X)) \vert < \gamma_{V_\infty}(\eps)$.
        By optimality this implies
        \begin{equation*}
        \begin{split}
            \vert \hat{V}_{\infty}(X) \vert \leq& \vert \hat{J}_M(X,\mathbf{U}^*_{N}) + \hat{V}_{\infty}(X_{\mathbf{U}^*_{\infty}}(M,X)) \vert \\
            <& \vert \hat{J}_M(X,\mathbf{U}^*_{N}) \vert + \gamma_{V_\infty}(\eps) 
        \end{split}
        \end{equation*}
        which shows the claim since $\hat{J}_M(X,\mathbf{U}^*_{N})$ is finite.
    \end{proof}

    \subsection{Non-average performance}

    In this section, we aim to obtain estimates for the non-average performance of the MPC Algorithm~\ref{alg:stochMPC1}, following similar arguments as presented in \cite{Gruene2017,Pirkelmann2020}
    , adapted to our stochastic setting. 
    Since the presented results are based on the properties from Assumption~\ref{ass:Asumptions1}, we always consider them as given in the remainder of this paper. 
    We start our investigations with two preliminary lemmas that relate the cost of optimal trajectories on a given horizon to the cost evaluated on shorter horizons.
    
    \begin{lem} \label{lem:Performance1}
        For the shifted optimal value function from Definition~\ref{defn:ShiftedCost} on infinite horizon  
        \begin{equation} \label{eq:DPP1}
            \hat{V}_{\infty}(X) = \hat{J}_M(X,\mathbf{U}^*_{\infty}) + \hat{V}_{\infty}(\mathbf{X}^s) + R_1(X,M)
        \end{equation}
        holds with $\vert R_1(X,M) \vert \leq \gamma_{V_\infty}(\vartheta_{\infty}(L))$ for all $X \in \XX$, all $N \in \N$, all sufficient large $L \in \N$ and all $M \notin \mathcal{Q}(X,L,\infty)$.
    \end{lem}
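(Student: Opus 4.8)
The plan is to apply the infinite-horizon dynamic programming principle (Theorem~\ref{thm:InfiniteDPP}) to the \emph{shifted} optimal control problem and then use the turnpike property together with the continuity of $\hat V$ to control the remainder term. First I note that, by Lemma~\ref{lem:finiteV}, $|\hat V_\infty(X)| < \infty$ for all $X \in \XX$, so Theorem~\ref{thm:InfiniteDPP} is applicable to the shifted problem (the shifted stage cost differs from $\ell$ only by the constant $\ell(\mathbf{X}^s,\mathbf{U}^s)$, so an optimal control for the original infinite-horizon problem is also optimal for the shifted one and the DPP carries over verbatim). Applying it at step $M$ with an optimal control $\mathbf{U}^*_\infty$ for initial value $X$ gives
\begin{equation*}
    \hat V_\infty(X) = \hat J_M(X,\mathbf{U}^*_\infty) + \hat V_\infty\big(X_{\mathbf{U}^*_\infty}(M,X)\big).
\end{equation*}
Comparing this with the claimed identity \eqref{eq:DPP1}, the remainder is forced to be $R_1(X,M) = \hat V_\infty\big(X_{\mathbf{U}^*_\infty}(M,X)\big) - \hat V_\infty(\mathbf{X}^s)$, so it remains to bound this difference.

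For the bound, I would fix $\eps > 0$ from the infinite-horizon continuity property in Definition~\ref{defn:Continuity}(\ref{defn:ContinuityInfinite}) and, using that $\vartheta_\infty \in \mathcal L$, choose $L$ large enough that $\vartheta_\infty(L) < \eps$. The infinite-horizon distributional turnpike property then guarantees that for every $M \notin \mathcal Q(X,L,\infty)$ we have $d_D\big(P_{X_{\mathbf{U}^*_\infty}(M,X)}, P^s_X\big) \leq \vartheta_\infty(L) < \eps$, i.e.\ $X_{\mathbf{U}^*_\infty}(M,X) \in B_\eps(P^s_X)$. Hence the continuity estimate applies and yields
\begin{equation*}
    \big| \hat V_\infty\big(X_{\mathbf{U}^*_\infty}(M,X)\big) - \hat V_\infty(\mathbf{X}^s) \big| \leq \gamma_{V_\infty}\big(d_D(P_{X_{\mathbf{U}^*_\infty}(M,X)}, P^s_X)\big) \leq \gamma_{V_\infty}(\vartheta_\infty(L)),
\end{equation*}
where the last inequality uses monotonicity of $\gamma_{V_\infty} \in \K_\infty$ together with the turnpike bound $d_D(\cdot,\cdot) \leq \vartheta_\infty(L)$. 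This is exactly the asserted estimate $|R_1(X,M)| \leq \gamma_{V_\infty}(\vartheta_\infty(L))$, and the phrase ``for all sufficiently large $L$'' in the statement is precisely the condition $\vartheta_\infty(L) < \eps$ needed to enter the continuity ball.

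The main subtlety — rather than a deep obstacle — is the interplay between the two horizons appearing in the turnpike definition: in Definition~\ref{defn:TurnpikeDistr}(ii) the exceptional set $\mathcal Q(X_0,L,\infty) \subseteq \{0,\ldots,N\}$ is indexed by an auxiliary $N$, whereas here we want a single $M$ for the infinite-horizon optimal trajectory. I would address this by noting that the turnpike bound $d_D(P_{X_{\mathbf{U}^*_\infty}(k,X)}, P^s_X) \leq \vartheta_\infty(L)$ holds for all $k \in \N_0 \setminus \mathcal Q(X,L,\infty)$ with $\#\mathcal Q(X,L,\infty) \leq L$, so in particular such good indices $M$ exist (indeed all but at most $L$ of them), which is all the statement requires when it quantifies over ``all $M \notin \mathcal Q(X,L,\infty)$.'' The reference to ``all $N \in \N$'' in the statement is then vacuous for this particular identity — it only concerns the infinite-horizon object — and is presumably carried along for uniformity with the companion finite-horizon lemma; I would simply not use it. Everything else is a direct substitution, so no lengthy computation is involved.
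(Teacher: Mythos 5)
Your proposal is correct and follows essentially the same route as the paper: apply the infinite-horizon DPP (Theorem~\ref{thm:InfiniteDPP}) to the shifted problem, identify $R_1(X,M) = \hat V_\infty(X_{\mathbf{U}^*_\infty}(M,X)) - \hat V_\infty(\mathbf{X}^s)$, and bound it by combining the infinite-horizon turnpike property with the continuity of $\hat V_\infty$ for $L$ large enough that $\vartheta_\infty(L)$ enters the continuity ball. Your additional remarks on the finiteness of $\hat V_\infty$ via Lemma~\ref{lem:finiteV} and on the vacuous role of $N$ are consistent with, and slightly more explicit than, the paper's argument.
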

    \begin{proof}
        The infinite horizon dynamic programming principle from Theorem~\ref{thm:InfiniteDPP} yields 
        $\hat{V}_{\infty}(X) = \hat{J}_M(X,\mathbf{U}^*_{\infty}) + \hat{V}_{\infty}(X_{\mathbf{U}^*_{\infty}}(M,X))$
        for all $M \in \N$. Hence, equation \eqref{eq:DPP1} holds with 
        $R_1(X,M) = \hat{V}_{\infty}(X_{\mathbf{U}^*_{\infty}}(M,X)) - \hat{V}_{\infty}(\mathbf{X}^s).$
        If we chose $L \in \N$ sufficient large such that $\vartheta_{\infty}(L) \leq \eps$ with $\vartheta_{\infty} \in \mathcal{L}$ from Definition~\ref{defn:TurnpikeDistr} and $\eps>0$ from {Definition~\ref{defn:Continuity}(\ref{defn:ContinuityInfinite})}, we can conclude that $\vert R_1(X,M) \vert \leq \gamma_{V_{\infty}}(\vartheta_{\infty}(L))$ holds for all $M \in \N$ with $M \notin \mathcal{Q}(X,L,\infty)$ by using the continuity of $\hat{V}_{\infty}$ and the turnpike property, cf.\ \cite[Lemma~2]{Gruene2017}.
    \end{proof}

    Lemma~\ref{lem:Performance1} states that the optimal cost on infinite horizon is approximately equal to the cost of the optimal trajectory evaluated only up to some appropriately chosen time index $M$. The next lemma shows, that this statement also holds for the optimal cost on finite horizons $N \in \N$.

    \begin{lem} \label{lem:Performance2}
        For the shifted optimal value function from Definition~\ref{defn:ShiftedCost} on finite horizon
        \begin{equation*} \label{eq:DPP2}
            \hat{V}_N(X) = \hat{J}_M(X,\mathbf{U}^*_{N}) + \hat{V}_{N-M}(\mathbf{X}^s) + R_2(X,M,N)
        \end{equation*}
        holds with $\vert R_2(X,M,N) \vert \leq \gamma_{V}(N-M,\vartheta(L))$ for all $X \in \XX$, all $N \in \N$, all sufficient large $L \in \N$ and all $M \notin \mathcal{Q}(X,L,N)$. 
    \end{lem}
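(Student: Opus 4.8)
The plan is to mirror the proof of Lemma~\ref{lem:Performance1}, replacing the infinite-horizon dynamic programming principle by its finite-horizon counterpart (Theorem~\ref{thm:FiniteDPP}) and the infinite-horizon turnpike and continuity properties by the finite-horizon ones. Write $\ell^s := \ell(\mathbf{X}^s,\mathbf{U}^s)$, so that $\hat\ell = \ell - \ell^s$ and hence $\hat{J}_M(X,\mathbf{U}) = J_M(X,\mathbf{U}) - M\ell^s$ and $\hat{V}_M(X) = V_M(X) - M\ell^s$ for every $M \in \N$. In particular a control sequence minimizes $J_N(X_0,\cdot)$ if and only if it minimizes $\hat{J}_N(X_0,\cdot)$, so $\mathbf{U}^*_N$ is optimal for the shifted problem as well.

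First I would apply Theorem~\ref{thm:FiniteDPP} to obtain, for all $M \in \{1,\ldots,N\}$,
\[
  V_N(X) = \sum_{k=0}^{M-1}\ell\big(X_{\mathbf{U}^*_N}(k,X),U^*_N(k)\big) + V_{N-M}\big(X_{\mathbf{U}^*_N}(M,X)\big).
\]
Subtracting $N\ell^s$ and splitting this constant as $M\ell^s + (N-M)\ell^s$ turns the right-hand side into $\hat{J}_M(X,\mathbf{U}^*_N) + \hat{V}_{N-M}(X_{\mathbf{U}^*_N}(M,X))$, so the claimed identity holds with
\[
  R_2(X,M,N) := \hat{V}_{N-M}\big(X_{\mathbf{U}^*_N}(M,X)\big) - \hat{V}_{N-M}(\mathbf{X}^s),
\]
where for $M = N$ both $\hat V_0$-terms vanish and $R_2 = 0$.

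It then remains to bound $R_2$. Let $\eps > 0$ be as in the finite-horizon continuity property of Definition~\ref{defn:Continuity}(\ref{defn:ContinuityFinite}) associated with the time index $M$, and choose $L \in \N$ large enough that $\vartheta(L) \le \eps$, which is possible since $\vartheta \in \mathcal{L}$. By the finite-horizon distributional turnpike property from Definition~\ref{defn:TurnpikeDistr}(i), for every $M \in \{0,\ldots,N\}\setminus\mathcal{Q}(X,L,N)$ we have $d_D\big(P_{X_{\mathbf{U}^*_N}(M,X)},P^s_X\big) \le \vartheta(L) \le \eps$, hence $X_{\mathbf{U}^*_N}(M,X) \in B_\eps(P^s_X)$. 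Applying the continuity estimate with horizon $N-M$ and using the monotonicity of $\gamma_V(N-M,\cdot)$ yields
\[
  |R_2(X,M,N)| \le \gamma_V\big(N-M,\, d_D(P_{X_{\mathbf{U}^*_N}(M,X)},P^s_X)\big) \le \gamma_V(N-M,\vartheta(L)),
\]
which is the asserted bound. This is the finite-horizon analogue of \cite[Lemma~2]{Gruene2017}, and I do not expect a genuine obstacle; the only points requiring care are the correct bookkeeping of the stationary-cost shift when passing from $V$ to $\hat V$, and checking that the continuity constant $\eps$ (which in Definition~\ref{defn:Continuity}(\ref{defn:ContinuityFinite}) may depend on the time index) is taken to be the one associated with the specific index $M$ at which the turnpike estimate is invoked.
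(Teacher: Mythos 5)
Your proposal is correct and follows essentially the same route as the paper: apply the finite-horizon dynamic programming principle, define $R_2(X,M,N) = \hat{V}_{N-M}(X_{\mathbf{U}^*_{N}}(M,X)) - \hat{V}_{N-M}(\mathbf{X}^s)$, and bound it by combining the finite-horizon turnpike property with the finite-horizon continuity of $\hat{V}$. The only difference is that you spell out the constant-shift bookkeeping between $V$ and $\hat V$ that the paper leaves implicit, which is a harmless (and helpful) elaboration.
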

    \begin{proof}
        The finite horizon dynamic programming principle from Theorem~\ref{thm:FiniteDPP} yields 
        $\hat{V}_{N}(X) = \hat{J}_M(X,\mathbf{U}^*_{N}) + \hat{V}_{N-M}(X_{\mathbf{U}^*_{N}}(M,X))$
        for all $M \in \{0,\ldots,N\}$. Hence, equation \eqref{eq:DPP2} holds with 
        $R_2(X,M,N) = \hat{V}_{N-M}(X_{\mathbf{U}^*_{N}}(M,X)) - \hat{V}_{N-M}(\mathbf{X}^s).$
        If we chose $L \in \N$ sufficient large such that $\vartheta(L) \leq \eps$ with $\vartheta \in \mathcal{L}$ from Definition~\ref{defn:TurnpikeDistr} and $\eps>0$ from {Definition~\ref{defn:Continuity}(\ref{defn:ContinuityFinite})}, we can conclude that $\vert R_2(X,M,N) \vert \leq \gamma_{V}(N-M,\vartheta(L))$ holds for all $M \notin \mathcal{Q}(X,L,N)$ by using the continuity of $\hat{V}_{\infty}$ and the turnpike property, cf.\ \cite[Lemma~2]{Gruene2017}.
    \end{proof}

    The next lemma shows that we can exchange the finite and infinite horizon optimal control sequences in the cost functional at the cost of a bounded error term.

    \begin{lem} \label{lem:Performance3}
        It holds that
        \begin{equation*}
            \hat{J}_M(X,\mathbf{U}^*_{\infty}) = \hat{J}_M(X,\mathbf{U}^*_{N}) + R_3(X,M,N)
        \end{equation*}
        holds with $\vert R_3(X,M,N) \vert \leq \gamma_V(N-M, \vartheta_{\infty}(L)) + \gamma_V(N-M, \vartheta(L)) + \gamma_{V_{\infty}}(\vartheta(L)) + \gamma_{V_{\infty}}(\vartheta_{\infty}(L))$ for all $N \in \N$, all sufficient large $L \in \N$, all $X \in \XX$, and all $M \in \{0,\ldots,N\} \setminus (\mathcal{Q}(X,L,N) \cup \mathcal{Q}(X,L,\infty))$.
    \end{lem}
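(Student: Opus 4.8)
The plan is to deduce the estimate from the dynamic programming decompositions already established in Lemmas~\ref{lem:Performance1} and~\ref{lem:Performance2}, together with two suboptimality bounds obtained by splicing the finite- and the infinite-horizon optimal controls at the time $M$. Throughout, fix $X\in\XX$ and $N\in\N$, choose $L\in\N$ large enough that both Lemmas~\ref{lem:Performance1} and~\ref{lem:Performance2} are applicable and that $\vartheta(L)$ and $\vartheta_{\infty}(L)$ both lie below the two radii $\eps$ occurring in Definition~\ref{defn:Continuity}(i) and~(ii), fix $M\in\{0,\dots,N\}\setminus(\mathcal{Q}(X,L,N)\cup\mathcal{Q}(X,L,\infty))$, and set $R_3(X,M,N):=\hat J_M(X,\mathbf{U}^*_{\infty})-\hat J_M(X,\mathbf{U}^*_{N})$, so that the asserted identity holds by definition and only the bound on $|R_3|$ has to be proved.

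For the one-sided bound $R_3(X,M,N)\le\gamma_{V_{\infty}}(\vartheta(L))+\gamma_{V_{\infty}}(\vartheta_{\infty}(L))$ I would combine Lemma~\ref{lem:Performance1}, which reads $\hat J_M(X,\mathbf{U}^*_{\infty})=\hat V_{\infty}(X)-\hat V_{\infty}(\mathbf{X}^s)-R_1(X,M)$, with the following observation: prepending the first $M$ controls of $\mathbf{U}^*_{N}$ to an optimal infinite-horizon control for the initial value $X_{\mathbf{U}^*_{N}}(M,X)\in\XX$ produces an admissible infinite-horizon control for $X$ whose shifted cost equals $\hat J_M(X,\mathbf{U}^*_{N})+\hat V_{\infty}(X_{\mathbf{U}^*_{N}}(M,X))$ (finite by Lemma~\ref{lem:finiteV}) and hence is no smaller than $\hat V_{\infty}(X)$. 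Subtracting these two relations yields
\[
\hat J_M(X,\mathbf{U}^*_{\infty})-\hat J_M(X,\mathbf{U}^*_{N})\le\bigl(\hat V_{\infty}(X_{\mathbf{U}^*_{N}}(M,X))-\hat V_{\infty}(\mathbf{X}^s)\bigr)-R_1(X,M);
\]
since $M\notin\mathcal{Q}(X,L,N)$, the finite-horizon turnpike gives $d_D(P_{X_{\mathbf{U}^*_{N}}(M,X)},P^s_X)\le\vartheta(L)<\eps$, so Definition~\ref{defn:Continuity}(ii) bounds the bracket in modulus by $\gamma_{V_{\infty}}(\vartheta(L))$, while $|R_1(X,M)|\le\gamma_{V_{\infty}}(\vartheta_{\infty}(L))$ by Lemma~\ref{lem:Performance1}. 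The reverse inequality follows by the mirror-image argument with the two horizons interchanged: using Lemma~\ref{lem:Performance2} in the form $\hat J_M(X,\mathbf{U}^*_{N})=\hat V_N(X)-\hat V_{N-M}(\mathbf{X}^s)-R_2(X,M,N)$, the admissible horizon-$N$ control obtained by prepending the first $M$ controls of $\mathbf{U}^*_{\infty}$ to an optimal horizon-$(N-M)$ control for $X_{\mathbf{U}^*_{\infty}}(M,X)$, and the fact that $M\notin\mathcal{Q}(X,L,\infty)$ together with Definition~\ref{defn:Continuity}(i), one obtains $\hat J_M(X,\mathbf{U}^*_{N})-\hat J_M(X,\mathbf{U}^*_{\infty})\le\gamma_V(N-M,\vartheta_{\infty}(L))+\gamma_V(N-M,\vartheta(L))$. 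Adding the two one-sided estimates gives $|R_3(X,M,N)|\le\gamma_V(N-M,\vartheta_{\infty}(L))+\gamma_V(N-M,\vartheta(L))+\gamma_{V_{\infty}}(\vartheta(L))+\gamma_{V_{\infty}}(\vartheta_{\infty}(L))$, which is the claimed bound.

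The step I expect to require the most care is the verification that the two spliced comparison controls are genuinely admissible in the sense of~\eqref{eq:Filtration} and that their shifted costs decompose exactly as written above. The ``tail'' optimal control used from time $M$ onwards is adapted to the filtration generated by the trajectory restarted at time $M$, and one has to check that re-inserting it into the process at the deterministic time $M$ preserves the causality condition~\eqref{eq:Filtration} and that, because $\mathbf{W}$ is i.i.d.\ and independent of the current state and $\ell$ depends only on distributions, this tail contributes precisely the value $\hat V_{\infty}(X_{\mathbf{U}^*_{N}}(M,X))$, respectively $\hat V_{N-M}(X_{\mathbf{U}^*_{\infty}}(M,X))$. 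This is exactly the Markov/measurable-selection structure underlying the dynamic programming principle (Theorems~\ref{thm:FiniteDPP} and~\ref{thm:InfiniteDPP} and Theorems~\ref{thm:PointwiseInf}--\ref{thm:FeedbackAlmostSure}), so once it is in place the remainder is routine bookkeeping with the turnpike and continuity estimates already at hand.
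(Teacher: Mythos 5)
Your proposal is correct and follows essentially the same route as the paper: both directions are obtained by combining the DPP decompositions of Lemmas~\ref{lem:Performance1} and~\ref{lem:Performance2} with the suboptimality of the controls spliced at time $M$ (which is exactly how the paper's inequalities $\hat J_M(X,\mathbf{U}^*_{\infty})+\hat V_{\infty}(X_{\mathbf{U}^*_{\infty}}(M,X))\le\hat J_M(X,\mathbf{U}^*_N)+\hat V_{\infty}(X_{\mathbf{U}^*_N}(M,X))$ and its finite-horizon counterpart arise), and the error terms you introduce coincide with the paper's $R_1,R_2,\tilde R_1,\tilde R_2$ and yield the identical four-term bound. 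The admissibility of the spliced controls, which you flag as the delicate point, is likewise handled in the paper only implicitly through the dynamic programming principle, so there is no substantive difference in approach.
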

    \begin{proof}
        Define 
        $\tilde{R}_1(X,M,N) := \hat{V}_{\infty}(X_{\mathbf{U}^*_{N}}(M,X)) - \hat{V}_{\infty}(\mathbf{X}^s)$
        for which we get by the continuity assumption and the turnpike property
        $\vert \tilde{R}_1(X,M,N) \vert \leq \gamma_{V_\infty}(\vartheta(L))$
        for all $N \in \N$, all sufficient large $L \in \N$ and $M \in \N \setminus \mathcal{Q}(X,L,N)$ similar to the proof of Lemma~\ref{lem:Performance1}. Then, we get by the infinite horizon dynamic programming principle from Theorem~\ref{thm:InfiniteDPP} that
        \begin{equation*}
        \begin{split}
            \hat{J}_M(X,\mathbf{U}^*_{\infty}) &+ \hat{V}_{\infty}(X_{\mathbf{U}^*_{\infty}}(M,X)) \\
            &\leq \hat{J}_M(X,\mathbf{U}^*_N) + \hat{V}_{\infty}(X_{\mathbf{U}^*_N}(M,X))
        \end{split}
        \end{equation*}
        and, thus, 
        \begin{equation*}
             \hat{J}_M(X,\mathbf{U}^*_{\infty}) \leq \hat{J}_M(X,\mathbf{U}^*_N) + \tilde{R}_1(X,M,N) - R_1(X,M)
        \end{equation*}
        with $\vert R_1(X,M) \vert \leq \gamma_{V_{\infty}}(\vartheta_{\infty}(L))$ for all sufficient large $L \in \N$, and all $M \notin \mathcal{Q}(X,L,\infty)$. \\
        To receive a lower bound on $\hat{J}_M(X,\mathbf{U}^*_{\infty})$ define
        \begin{equation*}
            \tilde{R}_2(X,M,N) := \hat{V}_{N-M}(X_{\mathbf{U}^*_{\infty}}(M,X)) - \hat{V}_{N-M}(\mathbf{X}^s)
        \end{equation*}
        with $\vert \tilde{R}_2(X,M,N) \vert \leq \gamma_{V}(N-M,\vartheta_{\infty}(L))$ for $M \in \N \setminus \mathcal{Q}(X,L,\infty)$ which holds by similar arguments as in Lemma~\ref{lem:Performance2} using the continuity assumption and the turnpike property.
        Additionally, we get by the finite horizon dynamic programming principle from Theorem~\ref{thm:InfiniteDPP} that
        \begin{equation*}
        \begin{split}
            \hat{J}_M(X,\mathbf{U}^*_N) &+ \hat{V}_{N-M}(X_{\mathbf{U}^*_N}(M,X)) \\
            &\leq \hat{J}_M(X,\mathbf{U}^*_{\infty}) + \hat{V}_{N-M}(X_{\mathbf{U}^*_{\infty}}(M,X))
        \end{split}
        \end{equation*}
        holds and, thus, 
        \begin{equation*}
             \hat{J}_M(X,\mathbf{U}^*_{N}) \leq \hat{J}_M(X,\mathbf{U}^*_{\infty}) + \tilde{R}_2(X,M,N) - R_2(X,M,N)
        \end{equation*}
        with $\vert R_2(X,M,N) \vert \leq \gamma_V(N-M,\vartheta(L))$ all sufficient large $L \in \N$, and all $M \notin \mathcal{Q}(X,L,N)$. \\
        Hence, we conclude for all sufficient large $L \in N$ and all $M \in \{0,\ldots,N\} \setminus (\mathcal{Q}(X,L,N) \cup \mathcal{Q}(X,L,\infty))$ that
        \begin{equation*}
            \begin{split}
                \vert R_3(X,M,N&) \vert = \vert \hat{J}_M(X,\mathbf{U}^*_N) - \hat{J}_M(X,\mathbf{U}^*_{\infty}) \vert \\
                &\leq  \gamma_{V_{\infty}}(\vartheta(L)) + \gamma_{V_{\infty}}(\vartheta_{\infty}(L)) \\
                & + \gamma_{V}(N-M,\vartheta_{\infty}(L)) + \gamma_V(N-M,\vartheta(L)),
            \end{split}
        \end{equation*}
        cf.\ \cite[Lemma~3]{Gruene2017}, which proves the claim.
    \end{proof}

    Now we can state our first main result for the closed-loop performance of the stochastic MPC Algorithm.

    \begin{thm} \label{thm:Performance4}
        There is a $\delta \in \mathcal{L}$ such that for all $L \in \N$ and each sufficient large $N \in \N$, the closed-loop cost satisfies 
        \begin{equation*}
            \hat{J}_K^{cl}(X,\mu_N) + \hat{V}_{\infty}(X_{\mu_N}(K,X)) \leq \hat{V}_{\infty}(X) + K\delta(N).
        \end{equation*}
    \end{thm}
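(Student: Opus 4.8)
The plan is to telescope the closed-loop cost over the MPC steps $j = 0, \ldots, K-1$ and control the accumulated error. For a single step, let $X_j := X_{\mu_N}(j, X)$ denote the $j$-th closed-loop state. By the finite-horizon dynamic programming principle (Theorem~\ref{thm:FiniteDPP}) applied with $M = 1$, we have $\hat V_N(X_j) = \hat\ell(X_j, \mu_N(X_j)) + \hat V_{N-1}(X_{j+1})$, where $X_{j+1} = f(X_j, \mu_N(X_j), W)$ and $\mu_N(X_j) = U^*_{N,X_j}(0)$. The goal is to replace both $\hat V_N(X_j)$ and $\hat V_{N-1}(X_{j+1})$ by the infinite-horizon value function $\hat V_\infty$ at the cost of a small error, so that summing over $j$ produces a telescoping sum in $\hat V_\infty$.

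First I would establish the key one-step estimate: for sufficiently large $N$ there is some $\delta \in \mathcal{L}$ with
\begin{equation*}
    \hat\ell(X_j, \mu_N(X_j)) + \hat V_\infty(X_{j+1}) \leq \hat V_\infty(X_j) + \delta(N).
\end{equation*}
To see this, combine Lemma~\ref{lem:Performance1}, Lemma~\ref{lem:Performance2} and Lemma~\ref{lem:Performance3} at the initial value $X_j$ with the shift index $M = 1$: choose $L \in \N$ large enough that the turnpike-excluded sets $\mathcal{Q}(X_j, L, N)$ and $\mathcal{Q}(X_j, L, \infty)$ do not contain $M = 1$ — this is possible provided $N$ is large enough relative to $L$, since these sets have at most $L$ elements and we have freedom to pick the turnpike index; more precisely one applies the lemmas with a suitable admissible small $M \notin \mathcal{Q}(X_j,L,N) \cup \mathcal{Q}(X_j,L,\infty)$ and then uses the DPP for that $M$ together with $\hat V_{N-M}(\mathbf{X}^s) = \hat V_\infty(\mathbf{X}^s) = 0$ (the latter from optimal operation, as in the proof of Lemma~\ref{lem:finiteV}). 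Chaining the three lemmas gives
\begin{equation*}
    \hat J_M(X_j, \mathbf{U}^*_{\infty,X_j}) \leq \hat V_\infty(X_j) + R_1 + \hat J_M(X_j,\mathbf{U}^*_{N,X_j}) - \hat J_M(X_j,\mathbf{U}^*_{\infty,X_j}) + \ldots
\end{equation*}
and after collecting the $R_i$ terms one obtains a bound of the form $\hat J_M(X_j, \mathbf{U}^*_{N,X_j}) \ge \hat V_N(X_j) - \hat V_{N-M}(\mathbf{X}^s) - (\text{errors})$, where each error is of the form $\gamma_V(N - M, \vartheta(L))$, $\gamma_V(N-M,\vartheta_\infty(L))$, $\gamma_{V_\infty}(\vartheta(L))$ or $\gamma_{V_\infty}(\vartheta_\infty(L))$. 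Since $\mu_N(X_j)$ is the first control of $\mathbf{U}^*_{N,X_j}$ and $\hat V_N(X_j) = \hat\ell(X_j,\mu_N(X_j)) + \hat V_{N-1}(X_{j+1})$, and since $\hat V_{N-1}(X_{j+1})$ differs from $\hat V_\infty(X_{j+1})$ by a term controlled by the same mechanism (turnpike plus continuity applied at $X_{j+1}$), one arrives at the one-step inequality with $\delta(N)$ built from these $\gamma_V(N-M,\cdot)$ and $\gamma_{V_\infty}(\vartheta(L)) + \gamma_{V_\infty}(\vartheta_\infty(L))$ contributions; choosing $L = L(N) \to \infty$ slowly enough that $N - M \to \infty$ as well makes all four pieces tend to $0$, so $\delta \in \mathcal{L}$. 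This is essentially the stochastic analogue of \cite[proof of Theorem~4]{Gruene2017}.

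Then I would sum the one-step inequality over $j = 0, \ldots, K-1$. Writing $X_0 = X$, the left-hand sides telescope in $\hat V_\infty$:
\begin{equation*}
    \sum_{j=0}^{K-1} \hat\ell(X_j, \mu_N(X_j)) + \sum_{j=0}^{K-1}\bigl(\hat V_\infty(X_{j+1}) - \hat V_\infty(X_j)\bigr) \leq K\delta(N),
\end{equation*}
and the first sum is exactly $\hat J_K^{cl}(X,\mu_N)$ while the second collapses to $\hat V_\infty(X_{\mu_N}(K,X)) - \hat V_\infty(X)$. Rearranging yields the claimed estimate $\hat J_K^{cl}(X,\mu_N) + \hat V_\infty(X_{\mu_N}(K,X)) \leq \hat V_\infty(X) + K\delta(N)$. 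All terms are finite by Lemma~\ref{lem:finiteV} (which guarantees $|\hat V_\infty(X_j)| < \infty$ for every closed-loop state, since $X_j \in \XX$) and the assumption $|\ell(\mathbf{X}^s,\mathbf{U}^s)| < \infty$.

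The main obstacle I anticipate is the bookkeeping around the turnpike-excluded index sets: the one-step estimate at $X_j$ requires a small shift index $M$ (morally $M=1$) to lie outside $\mathcal{Q}(X_j,L,N) \cup \mathcal{Q}(X_j,L,\infty)$, but these sets depend on $X_j$ and are only guaranteed to have $\le L$ elements, not to avoid any particular index. The clean way around this is the standard trick from \cite{Gruene2017}: one does not insist on $M=1$ but instead picks an admissible $M$ in a window $\{0,\ldots,\bar M\}$ of size $\bar M > 2L$, obtains the estimate for that $M$, and observes that the extra cost incurred between time $1$ and time $M$ along the optimal trajectory is itself bounded via the turnpike and continuity (the optimal trajectory is near the stationary pair there, so its shifted stage cost is small). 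Making this uniform in $j$ — so that a single $\delta \in \mathcal{L}$, depending only on $N$, works for all closed-loop states $X_j$ and all $K$ — is where the $\mathcal{L}$-function structure of $\vartheta, \vartheta_\infty$ and the monotonicity of $\gamma_V, \gamma_{V_\infty}$ must be used carefully, exactly as in the deterministic time-varying theory of \cite{Gruene2017,Pirkelmann2020}.
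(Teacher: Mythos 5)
Your overall architecture (a one-step estimate $\hat\ell(X_j,\mu_N(X_j)) \le \hat V_\infty(X_j)-\hat V_\infty(X_{j+1})+\delta(N)$, followed by telescoping over $j=0,\dots,K-1$) is exactly the paper's, and the telescoping and finiteness arguments are fine. The gap is in how you obtain the one-step estimate. First, the claim that one can choose $L$ so that $\mathcal{Q}(X_j,L,N)$ and $\mathcal{Q}(X_j,L,\infty)$ avoid the specific index $M=1$ is unfounded: these sets are only bounded in cardinality, and near the initial time the optimal trajectory is typically still in its transient, so the index $1$ will in general be excluded. You acknowledge this, but your proposed repair --- pick an admissible $M$ in a window and bound ``the extra cost incurred between time $1$ and time $M$'' by arguing that the optimal trajectory is near the stationary pair there, so its shifted stage cost is small --- is not available under Assumption~\ref{ass:Asumptions1}: the turnpike property of Definition~\ref{defn:TurnpikeDistr} controls only the distributions of the \emph{states}, Definition~\ref{defn:Continuity} gives continuity of the shifted \emph{value function} and not of $\hat\ell$, and nothing constrains the optimal \emph{controls} near the turnpike. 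So that step would need extra hypotheses the paper does not make, and it is also not the mechanism used in \cite{Gruene2017}.

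The device your sketch is missing is the tail identity exploited in the paper's proof: since $\mu_N(X_j)=U^*_{N,X_j}(0)$ and $U^*_{N,X_j}(s+1)=U^*_{N-1,X_{j+1}}(s)$ for $s=0,\dots,N-2$, the DPP difference satisfies $\hat V_N(X_j)-\hat V_{N-1}(X_{j+1}) = \hat J_K(X_j,\mathbf{U}^*_{N,X_j}) - \hat J_{K-1}(X_{j+1},\mathbf{U}^*_{N-1,X_{j+1}})$ for \emph{every} truncation index $K\in\{1,\dots,N\}$, because the two open-loop trajectories coincide from time $1$ onward and the costs beyond the truncation cancel exactly; no bound on intermediate stage costs is ever needed. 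One then picks a single $K\le N/2$ outside the at most $4L$ indices of $\mathcal{Q}(X_j,L,N)\cup\mathcal{Q}(X_j,L,\infty)$ and (shifted by one) of $\mathcal{Q}(X_{j+1},L,N-1)\cup\mathcal{Q}(X_{j+1},L,\infty)$, which exists once $N>8L$, applies Lemma~\ref{lem:Performance3} and Lemma~\ref{lem:Performance1} to both truncated costs --- the $\hat V_\infty(\mathbf{X}^s)$ terms cancel, so your unproven assertion $\hat V_{N-M}(\mathbf{X}^s)=0$ is never required --- and finally sets $L=\lfloor (N-1)/8\rfloor$, using the monotonicity of $\gamma_V$ and $\gamma_{V_\infty}$ to get one $\delta\in\mathcal{L}$ uniform in $j$. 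Without this identity your one-step bound does not go through as written, so the proposal has a genuine gap at its central step.
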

    \begin{proof}
        Consider $X(k)$ and abbreviate $X^+(k) := f(X(k),\mu_N(X(k)),W(k))$. Using the finite horizon dynamic programming principle, cf.\ Theorem~\ref{thm:FiniteDPP}, and $\mu_N(X(k)) := U^*_{N,X(k)}(0)$ we know that 
        \begin{equation*} 
            \hat{\ell}(X(k),\mu_N(X(k))) = \hat{V}_N(X(k)) - \hat{V}_{N-1}(X^+(k))
        \end{equation*}
        holds for all $k \in \N$. By the definition of the optimal value function and the fact that $U_{N,X(k)}^*(s + 1) = U_{N-1,X^+(k)}^*(s)$ for all $s = 0,\ldots,N-2$
        we get
        \begin{equation*}
        \begin{split}
            \hat{V}&_N(X(k)) - \hat{V}_{N-1}(X^+(k)) \\
            &= \hat{J}_N(X(k), \mathbf{U}_{N,X(k)}^*) - \hat{J}_{N-1}(X^+(k),\mathbf{U}_{N-1,X^+(k)}^*) \\
            &= \hat{J}_K(X(k), \mathbf{U}_{N,X(k)}^*) - \hat{J}_{K-1}(X^+(k),\mathbf{U}_{N-1,X^+(k)}^*) 
        \end{split}
        \end{equation*}
        for all $K \in \{1,\ldots,N\}$.
        Now let $K \in \{1,\ldots,N\}$ such that $K \notin \mathcal{Q}(X,L,N) \cup \mathcal{Q}(X,L,\infty)$ and $K-1 \notin  \mathcal{Q}(X,L,N-1) \cup \mathcal{Q}(X,L,\infty)$. In each of the four sets there are at most $L$ elements, and thus, for $N > 8L$ there is at least one such $K$ with $K \leq \frac{N}{2}$. 
        This means by setting $L = \left\lfloor \frac{N-1}{8} \right\rfloor$ and applying Lemma~\ref{thm:Performance4} and Lemma~\ref{lem:Performance1}, we obtain
        \begin{equation} \label{eq:ReformStagecost}
        \begin{split}
            \hat{\ell}(X(k),\mu_N(X(k))) =& \hat{V}_{\infty}(X(k)) - \hat{V}_{\infty}(X^+(k)) \\
            &+ R_4(X(k),K,N)
        \end{split}
        \end{equation}
        with
        \begin{equation*}
        \begin{split}
            R_4(X(k)&,K,N) = - R_1(X(k),K) + R_1(X^+(k),K-1) \\
            & - R_3(X(k),K,N) + R_3(X^+(k),K-1,N-1) 
        \end{split}
        \end{equation*}
        following the same arguments as in \cite[Theorem~1]{Gruene2017}.
        In addition, from the monotonicity of $\gamma_V$ together with Lemma~\ref{lem:Performance1} and Lemma~\ref{lem:Performance3} 
        we obtain $\vert R_4(X(k),K,N) \vert \leq \delta(N)$ with
        \begin{equation*} 
        \begin{split}
            \delta(N) := & 4 \gamma_{V_\infty}(\vartheta_{\infty}(\left\lfloor \tfrac{N-1}{8} \right\rfloor)) + 2\gamma_V(\left\lfloor \tfrac{N}{2} \right\rfloor, \vartheta_{\infty}(\left\lfloor \tfrac{N-1}{8} \right\rfloor)) \\
            &+ 2\gamma_V(\left\lfloor \tfrac{N}{2} \right\rfloor, \vartheta(\left\lfloor \tfrac{N-1}{8} \right\rfloor)) + 2\gamma_{V_{\infty}}(\vartheta(\left\lfloor \tfrac{N-1}{8} \right\rfloor)).
        \end{split}
        \end{equation*}
        Applying equation \ref{eq:ReformStagecost} for $X(k) = X_{\mu_N}(k,X)$, $k=0,\ldots,K-1$ this yields
        \begin{equation*}
        \begin{split}
            &\hat{J}^{cl}_K(X,\mu_N) = \sum_{k=0}^{K-1} \hat{\ell}(X_{\mu_N}(k,X), \mu_N(X_{\mu_N}(k,X))) \\
            \leq& \sum_{k=0}^{K-1} \left( \hat{V}_{\infty}(X_{\mu_N}(k,X)) - \hat{V}_{\infty}(X_{\mu_N}(k+1,X))+ \delta(N) \right) \\
            =& \hat{V}_{\infty}(X_{\mu_N}(0,X)) - \hat{V}_{\infty}(X_{\mu_N}(K,X))+ K\delta(N) 
        \end{split}
        \end{equation*}
        which proves the claim.
    \end{proof}

    The identity from Theorem~\ref{thm:Performance4} can be interpreted as follows: If we follow the closed-loop trajectory for $K$ steps and continue with an infinite-horizon optimal trajectory, then the performance is near-optimal and the gap to optimality is bounded by $K\delta(N)$.
    
    Note that for formulating Theorem~\ref{thm:Performance4} the use of the shifted cost is unavoidable, because otherwise the infinite-horizon optimal value function will not attain a finite value, not even in the stationary solution. An alternative way of expressing (near) infinite-horizon optimality without having to shift the cost is the concept of overtaking optimality \cite{gale1967}. The next corollary reinterprets Theorem~\ref{thm:Performance4} in the overtaking sense without using the shifted cost.
    
    \begin{cor}
        For each $X \in \XX$ and $N \in \N$ the MPC closed-loop trajectory is approximately overtaking optimal, i.e., there is an $\delta \in \K_{\infty}$ such that the inequality 
        \begin{equation*}
        \begin{split}
            &\liminf_{K \rightarrow \infty} \bigg( \sum_{k=0}^{K-1} \Big(\ell(X_{\mathbf{U}}(k,X),U(k)) \\
            & \quad - \ell(X_{\mu_N}(k,X),\mu_N(X_{\mu_N}(k,X))) \Big) + K \delta(N) \bigg) \geq 0
        \end{split}
        \end{equation*}
        holds for all $\mathbf{U} \in \U^{\infty}(X)$.
        \label{cor:overtaking}
    \end{cor}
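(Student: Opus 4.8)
The plan is to read the statement off Theorem~\ref{thm:Performance4} after passing to the shifted cost. First I would note that the constant stationary cost cancels in the \emph{difference} of two running costs: since $\hat{\ell}(\cdot,\cdot)=\ell(\cdot,\cdot)-\ell(\mathbf{X}^s,\mathbf{U}^s)$ we have $\hat{J}_K(X,\mathbf{U})=J_K(X,\mathbf{U})-K\ell(\mathbf{X}^s,\mathbf{U}^s)$ and likewise $\hat{J}^{cl}_K(X,\mu_N)=J^{cl}_K(X,\mu_N)-K\ell(\mathbf{X}^s,\mathbf{U}^s)$, so the bracketed sum appearing in the corollary equals $\hat{J}_K(X,\mathbf{U})-\hat{J}^{cl}_K(X,\mu_N)$. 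Hence it suffices to show
\[
  \liminf_{K\to\infty}\Big(\hat{J}_K(X,\mathbf{U})-\hat{J}^{cl}_K(X,\mu_N)+K\delta(N)\Big)\ge 0 \qquad\text{for all }\mathbf{U}\in\U^{\infty}(X),
\]
where $\delta$ is the function from Theorem~\ref{thm:Performance4}; replacing it by a $\K_{\infty}$-majorant only weakens the inequality, so the formulation with $\delta\in\K_{\infty}$ follows as well.

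The next step is to bound the two ingredients separately. For the closed loop I would use Theorem~\ref{thm:Performance4} together with the non-negativity $\hat{V}_{\infty}\ge 0$ — which follows, exactly as in the proof of Lemma~\ref{lem:finiteV}, from optimal operation combined with $\hat{V}_{\infty}(\mathbf{X}^s)=0$ — to get, for every sufficiently large $N$ and all $K$,
\[
  \hat{J}^{cl}_K(X,\mu_N)\;\le\;\hat{V}_{\infty}(X)-\hat{V}_{\infty}(X_{\mu_N}(K,X))+K\delta(N)\;\le\;\hat{V}_{\infty}(X)+K\delta(N).
\]
Inserting this into the quantity above yields the lower bound $\hat{J}_K(X,\mathbf{U})-\hat{J}^{cl}_K(X,\mu_N)+K\delta(N)\ge\hat{J}_K(X,\mathbf{U})-\hat{V}_{\infty}(X)$, where the $K\delta(N)$ terms cancel.

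It then remains to pass to the limit, using that $\liminf_{K\to\infty}\hat{J}_K(X,\mathbf{U})\ge\hat{V}_{\infty}(X)$ for every $\mathbf{U}\in\U^{\infty}(X)$ — the statement that no admissible trajectory can, in the $\liminf$, beat the infinite-horizon optimum. I would obtain this from the infinite-horizon dynamic programming principle (Theorem~\ref{thm:InfiniteDPP} applied to the shifted problem), which gives $\hat{V}_{\infty}(X)\le\hat{J}_K(X,\mathbf{U})+\hat{V}_{\infty}(X_{\mathbf{U}}(K,X))$ for every $K$ with the cost-to-go finite by Lemma~\ref{lem:finiteV}, combined with the definition of $\hat{V}_{\infty}$ as an infimum over admissible controls and the lower boundedness of $g$. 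Granting it, $\liminf_{K\to\infty}\big(\hat{J}_K(X,\mathbf{U})-\hat{J}^{cl}_K(X,\mu_N)+K\delta(N)\big)\ge\liminf_{K\to\infty}\hat{J}_K(X,\mathbf{U})-\hat{V}_{\infty}(X)\ge 0$, which is the assertion. I expect exactly this last ingredient to be the delicate point: in the deterministic literature it is routine (cf.\ \cite{Gruene2017,Pirkelmann2020}), but here one has to check that the reasoning carries over to the distribution-valued closed-loop and competitor trajectories, in particular ruling out — under the merely \emph{non-strict} optimal operation assumption — a cost-minimizing subsequence along which the trajectory sits far from $\mathbf{X}^s$ while accumulating only vanishing cost.
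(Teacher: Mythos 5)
Your overall architecture is the same as the paper's: use Theorem~\ref{thm:Performance4} together with $\hat{V}_{\infty}\ge 0$ (from optimal operation and $\hat V_\infty(\mathbf{X}^s)=0$) to bound $\hat{J}^{cl}_K(X,\mu_N)\le \hat{V}_{\infty}(X)+K\delta(N)$, then reduce the claim to ``no admissible $\mathbf{U}$ beats the infinite-horizon optimum in the $\liminf$ sense,'' and finally observe that the constant $\ell(\mathbf{X}^s,\mathbf{U}^s)$ cancels in the difference so the shifted cost can be removed. However, the mechanism you propose for the one nontrivial remaining step does not work as written. From the suboptimality form of the DPP you only get
\begin{equation*}
\hat{J}_K(X,\mathbf{U})\;\ge\;\hat{V}_{\infty}(X)-\hat{V}_{\infty}(X_{\mathbf{U}}(K,X)),
\end{equation*}
and the subtracted term is \emph{nonnegative} and, for an arbitrary competitor $\mathbf{U}$, has no reason to vanish (the competitor may stay far from the turnpike, so $\hat V_\infty$ along it can remain bounded away from zero, or even grow). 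Hence this inequality is strictly weaker than the claim $\liminf_{K\to\infty}\hat{J}_K(X,\mathbf{U})\ge\hat{V}_{\infty}(X)$ you need; the sign of $\hat V_\infty(X_{\mathbf{U}}(K,X))$ points the wrong way, and neither the lower boundedness of $g$ nor Lemma~\ref{lem:finiteV} repairs this. You yourself flag the delicate point at the end, but the DPP-inequality route you sketch does not close it.

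The paper circumvents exactly this by never applying the cost-to-go bound along the arbitrary trajectory. It compares against the infinite-horizon optimal trajectory: by the DPP \emph{equality} of Theorem~\ref{thm:InfiniteDPP}, $\hat{J}_K(X,\mathbf{U}^*_{\infty})=\hat{V}_{\infty}(X)-\hat{V}_{\infty}(X_{\mathbf{U}^*_{\infty}}(K,X))$, and along this trajectory the infinite-horizon turnpike property plus the continuity of $\hat V_\infty$ at $P^s_X$ (Assumption~\ref{ass:Asumptions1}) force $\hat{V}_{\infty}(X_{\mathbf{U}^*_{\infty}}(K,X))\to 0$, so $\hat{J}_K(X,\mathbf{U}^*_{\infty})\to\hat{V}_{\infty}(X)$; this yields the overtaking-type inequality between the closed loop and $\mathbf{U}^*_{\infty}$ (along the computations of Pirkelmann, Cor.~4.17), and the comparison with an arbitrary $\mathbf{U}$ is then delegated to the (overtaking) optimality of $\mathbf{U}^*_{\infty}$. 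In short, the turnpike and continuity assumptions must be exploited along the \emph{optimal} trajectory, where the cost-to-go does vanish, rather than along the competitor; supplying that step (or an explicit argument for the overtaking optimality of $\mathbf{U}^*_\infty$) is what your proposal is missing.
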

    \begin{proof}
        From Theorem~\ref{thm:Performance4} we know that $\hat{J}^{cl}_K(X,\mu_N) \leq \hat{V}_{\infty}(X) + K \delta(N)$
        since $\hat{V}_{\infty}(X_{\mu_N}(K,X) \geq 0$ due to the optimal operation in the sense of Definition~\ref{defn:OptOperated}.
        By inserting the definitions of the closed-loop cost and the optimal value function, we obtain 
        \begin{equation*}
        \begin{split}
            &\liminf_{K \rightarrow \infty} \bigg( \sum_{k=0}^{K-1} \Big(\hat{\ell}(X_{\mathbf{U}^*_{\infty}}(k,X),U^*_{\infty}(k)) \\
            & \quad - \hat{\ell}(X_{\mu_N}(k,X),\mu_N(X_{\mu_N}(k,X))) \Big) + K \delta(N) \bigg) \geq 0
        \end{split}
        \end{equation*}
        along the computations of \cite[Corollary~4.17]{Pirkelmann2020}. Because of the optimality of $\mathbf{U}^*_{\infty}$ we can further conclude that 
        \begin{equation*}
        \begin{split}
            \liminf_{K \rightarrow \infty} \bigg( \sum_{k=0}^{K-1} &\Big(\hat{\ell}(X_{\mathbf{U}}(k,X),U(k)) \\
            & \quad - \hat{\ell}(X_{\mathbf{U}^*_{\infty}}(k,X),U^*_{\infty}(k)) \Big) \bigg) \geq 0
        \end{split}
        \end{equation*}
        holds for all $\mathbf{U} \in \U^{\infty}(X)$, which implies the assertion since $\hat{\ell}(X,U) := \ell(X,U) - \ell(\mathbf{X}^s,\mathbf{U}^s)$.
    \end{proof}

    \begin{Bem}
        One might be worried by the $K$-dependence of the error terms $K\delta(N)$ in both Theorem~\ref{thm:Performance4} and Corollary~\ref{cor:overtaking}, because this implies that the deviation from the optimal cost grows linearly in $K$. However, as the accumulated non-shifted cost itself grows linearly in $K$ (except in the unlikely case when the stationary cost $\ell(\mathbf{X}^s,\mathbf{U}^s)$ equals $0$), the relative deviation to the non-shifted optimal cost is constant in $K$. 
        This is illustrated for a numerical example in Section~\ref{sec:5}, cf. Figure~\ref{fig:cost} (left).
    \end{Bem}
    
    \subsection{Average performance}

    After receiving an estimate for the non-averaged performance in Theorem~\ref{thm:Performance4}, we will use this result to obtain an upper bound for the averaged closed-loop performance, 
    defined by 
    \begin{equation*}
        \bar{J}^{cl}_{K}(X,\mu_N) :=  \frac{1}{K} J_K^{cl}(X,\mu_N).
    \end{equation*}

    \begin{thm} \label{thm:AvgPerformance}
        For $\delta \in \mathcal{L}$ from Theorem~\ref{thm:Performance4} and  all $X \in \XX$ the averaged closed-loop cost satisfies
        \begin{equation*}
            \limsup_{K \to \infty} \bar{J}^{cl}_{K}(X,\mu_N)
            \leq \ell(\mathbf{X}^s,\mathbf{U}^s) + \delta(N).
        \end{equation*}
    \end{thm}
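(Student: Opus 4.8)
The plan is to reduce the averaged bound to the non-averaged estimate of Theorem~\ref{thm:Performance4}, then divide by $K$ and let $K \to \infty$. First I would invoke Theorem~\ref{thm:Performance4}: for each sufficiently large $N \in \N$ and all $K \in \N$,
\[
\hat{J}_K^{cl}(X,\mu_N) + \hat{V}_{\infty}(X_{\mu_N}(K,X)) \leq \hat{V}_{\infty}(X) + K\delta(N).
\]
As in the proof of Corollary~\ref{cor:overtaking}, optimal operation at $(\mathbf{X}^s,\mathbf{U}^s)$ (Definition~\ref{defn:OptOperated}) gives $\hat{V}_{\infty} \geq 0$, since $\hat{V}_{\infty}(Y) = \inf_{\mathbf{U}} \hat{J}_{\infty}(Y,\mathbf{U}) \geq 0$. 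Hence the nonnegative term $\hat{V}_{\infty}(X_{\mu_N}(K,X))$ may be dropped, leaving
\[
\hat{J}_K^{cl}(X,\mu_N) \leq \hat{V}_{\infty}(X) + K\delta(N).
\]

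Next I would translate the shifted closed-loop cost back into the original one. By Definition~\ref{defn:ShiftedCost}, $\hat{\ell}(\cdot,\cdot) = \ell(\cdot,\cdot) - \ell(\mathbf{X}^s,\mathbf{U}^s)$, so summing over the $K$ closed-loop steps yields $\hat{J}_K^{cl}(X,\mu_N) = J_K^{cl}(X,\mu_N) - K\,\ell(\mathbf{X}^s,\mathbf{U}^s)$. Substituting this into the previous inequality and dividing by $K$ gives
\[
\bar{J}^{cl}_{K}(X,\mu_N) \;=\; \frac{1}{K} J_K^{cl}(X,\mu_N) \;\leq\; \ell(\mathbf{X}^s,\mathbf{U}^s) + \delta(N) + \frac{\hat{V}_{\infty}(X)}{K}.
\]

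Finally I would take $\limsup_{K \to \infty}$. By Lemma~\ref{lem:finiteV}, $\vert \hat{V}_{\infty}(X) \vert < \infty$ for all $X \in \XX$, so the term $\hat{V}_{\infty}(X)/K$ vanishes in the limit, leaving $\limsup_{K\to\infty}\bar{J}^{cl}_{K}(X,\mu_N) \leq \ell(\mathbf{X}^s,\mathbf{U}^s) + \delta(N)$, which is the claim. There is no real obstacle here: the two facts doing the work, namely $\hat{V}_{\infty} \geq 0$ (from optimal operation) and $\hat{V}_{\infty}(X)$ finite (Lemma~\ref{lem:finiteV}), are already available, and everything else is bookkeeping with the definition of the shifted cost.
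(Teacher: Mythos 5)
Your proposal is correct and follows essentially the same route as the paper's proof: drop the nonnegative term $\hat{V}_{\infty}(X_{\mu_N}(K,X))$ using optimal operation, rewrite the shifted cost as $\bar{J}^{cl}_K - \ell(\mathbf{X}^s,\mathbf{U}^s)$, divide by $K$, and use the finiteness of $\hat{V}_{\infty}(X)$ from Lemma~\ref{lem:finiteV} to make the remainder vanish in the $\limsup$. The only difference is the trivial ordering of when you divide by $K$.
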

    \begin{proof}
        From Theorem~\ref{thm:Performance4} we know that 
        \begin{equation*}
            \hat{J}_K^{cl}(X,\mu_N) \leq \hat{V}_{\infty}(X) - \hat{V}_{\infty}(X_{\mu_N}(K,X)) + K\delta(N)
        \end{equation*}
        holds which is equivalent to 
        \begin{equation*}
            \frac{1}{K} \hat{J}_K^{cl}(X,\mu_N) \leq \frac{1}{K} (\hat{V}_{\infty}(X) - \hat{V}_{\infty}(X_{\mu_N}(K,X))) + \delta(N).
        \end{equation*}
        Moreover, since $\hat{V}_{\infty}(X_{\mu_N}(K,X)) \geq 0$ due to the optimally operation, we get
        \begin{equation*}
            \frac{\hat{J}_K^{cl}(X,\mu_N)}{K} = \bar{J}_K^{cl}(X,\mu_N) - \ell(\mathbf{X}^s,\mathbf{U}^s) \leq \frac{\hat{V}_{\infty}(X)}{K} + \delta(N)
        \end{equation*}
        which yields 
        $\limsup_{K \to \infty} \bar{J}^{cl}_{K}(X,\mu_N) \leq \ell(\mathbf{X}^s,\mathbf{U}^s) + \delta(N)$
        since $\hat{V}_{\infty}(X)$ is finite by Lemma~\ref{lem:finiteV},
        and thus, proves the claim.
    \end{proof}

\section{NUMERICAL SIMULATIONS}\label{sec:5}
    To illustrate our theoretical findings, we consider the one-dimensional nonlinear optimal control problem 
    \begin{equation} \label{eq:example}
        \begin{split}
            \min_{\mathbf{U} \in \U^{N}(X_0)} J_N(X_0,\mathbf{U}) &:= \sum_{k=0}^{N-1} \Exp{X(k)^2 + 25 U(k)^2} \\
            s.t. ~ X(k+1) &= (U(k)-X(k))^2 + W(k)
        \end{split}
    \end{equation}
    where $W(k)$ follows a two-point distribution such that $W(k) = a := 1$ with probability $p_a = 0.7$ and $W(k) = b := 0.25$ with probability $p_b = 0.3$. 
    Although it is difficult to compute the optimal stationary distribution of problem \eqref{eq:example} exactly, the simulations visualized in Figure~\ref{fig:turnpike} suggest that the turnpike property holds since all possible realizations of the optimal trajectories are close to each other in the middle of the time horizons. 
    To illustrate the closed-loop performance, we approximated the expected costs by Monte-Carlo sampling using $1000$ realizations of the MPC closed-loop trajectories with initial value $X_0 = 3$ computed by Algorithm~\ref{alg:stochMPC2}. 
    Figure~\ref{fig:cost} shows the non-averaged and averaged performances of the stochastic MPC algorithm for these simulations. 
    We can observe that for sufficiently large $K$, the cumulative costs increase approximately linearly at different rates for different horizons $N$, where the difference in the rates is caused by the different $K\delta(N)$ terms from Theorem~\ref{thm:Performance4}. The dependence on $N$ is also visible for the averaged performance, where we can see that the costs converge to a neighborhood of $\ell(\mathbf{X}^s,\mathbf{U}^s) \approx 9.83$ and get closer to this value for increasing horizon length, illustrating the results from Theorem~\ref{thm:AvgPerformance}.
    
    \begin{figure}[ht]
    \begin{center}
        \includegraphics[width=0.48\textwidth]{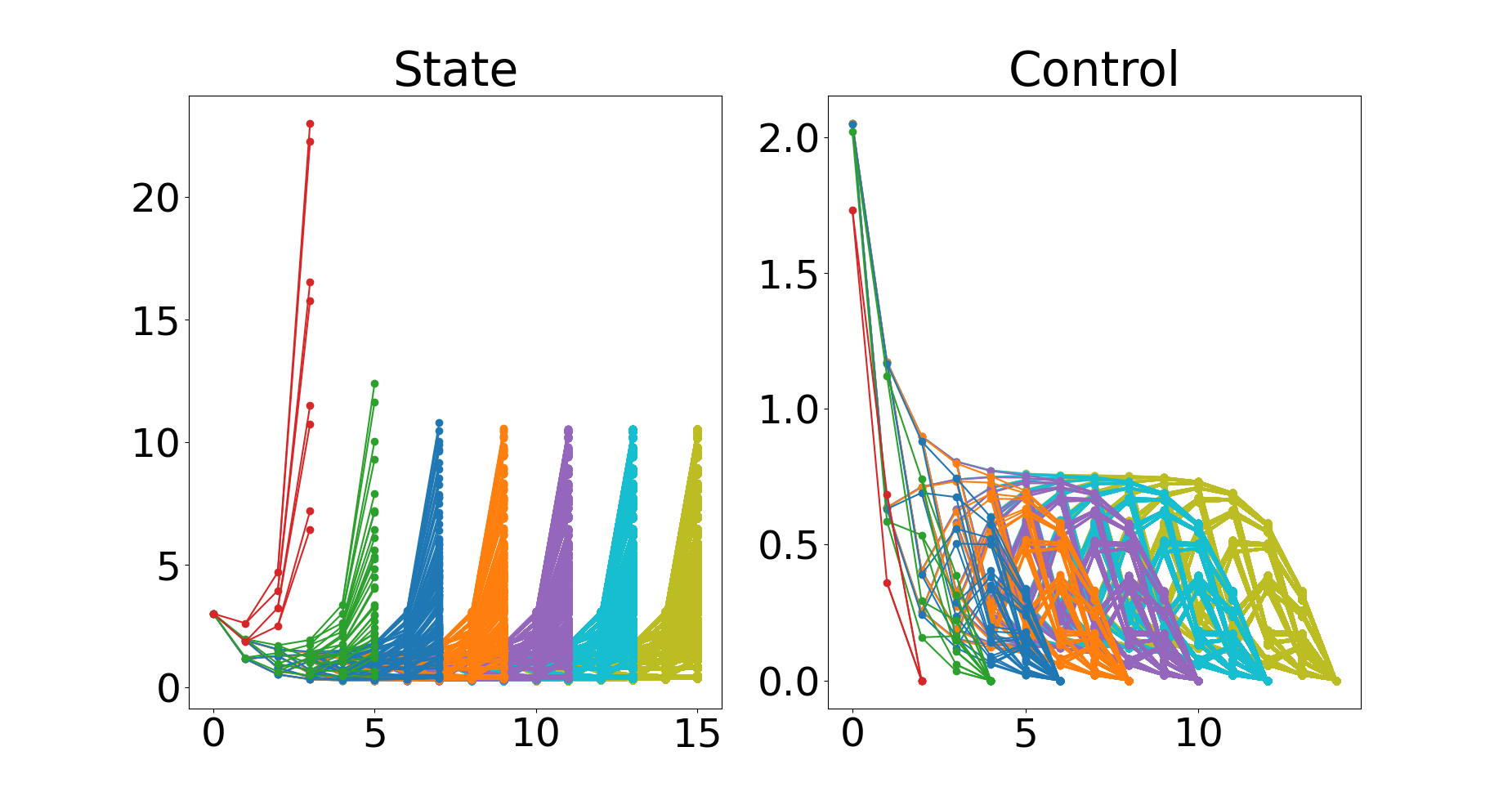}
        \vspace{-0.8cm}
        \caption{Evolution of all realization paths for the optimal trajectories (left) and controls (right) for $X_0=3$ and $N = 3,5,\ldots,15$. \label{fig:turnpike}}
    \end{center}
    \vspace{-0.2cm}
    \end{figure}
    
    \begin{figure}[ht]
    \begin{center}
        \includegraphics[width=0.48\textwidth]{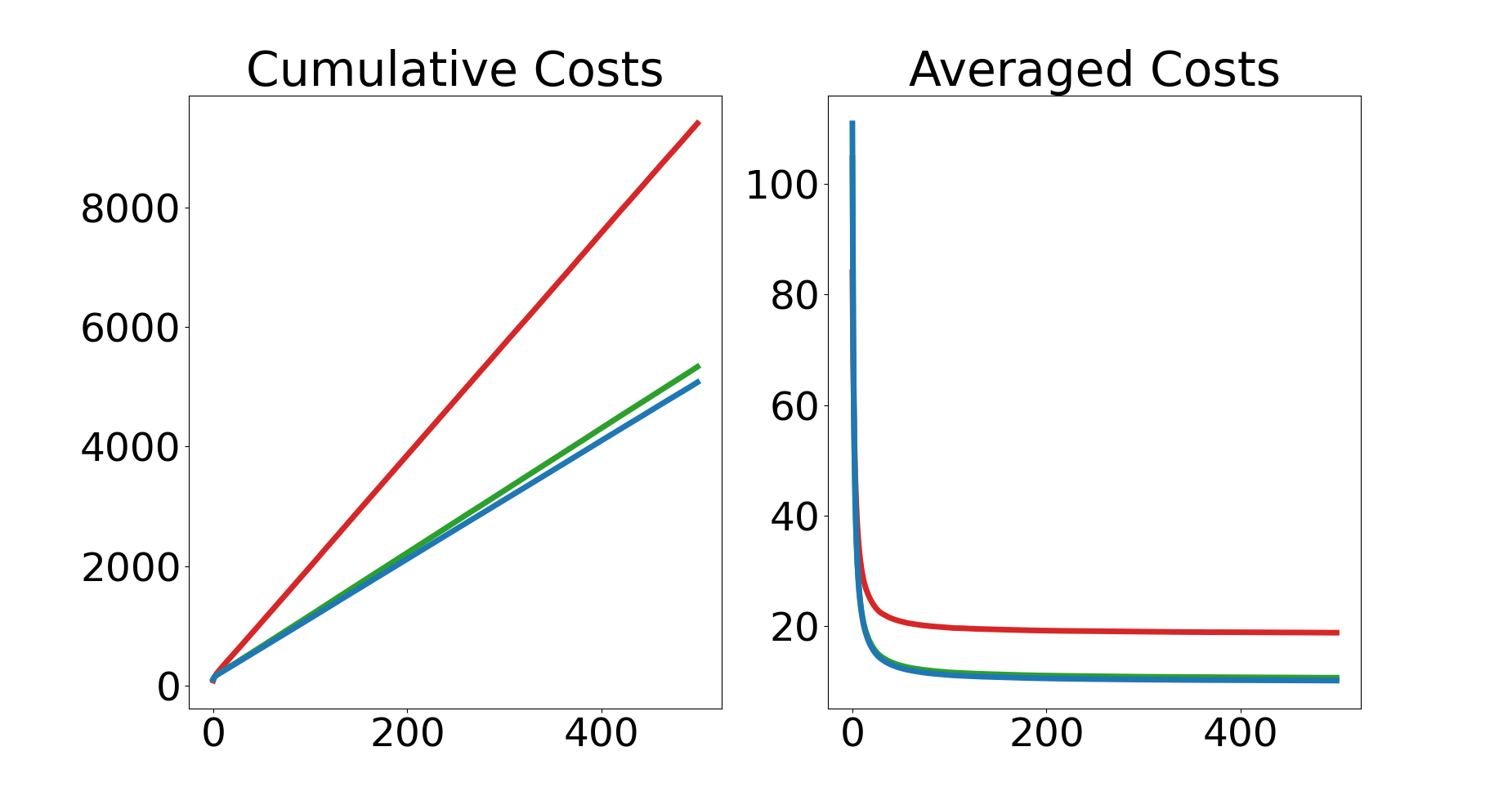}
        \vspace{-0.8cm}
        \caption{Cumulative and averaged costs of MPC Algorithm~\ref{alg:stochMPC2} for $N=3$ (red), $N=4$ (green), and $N=5$ (blue). \label{fig:cost}}
    \end{center}
    \vspace{-0.7cm}
    \end{figure}
    
\section{CONCLUSION}\label{sec:6}
We presented near-optimal performance results for a stochastic MPC scheme that is practically implementable since it uses only the state information along a sample path in each prediction step. Our investigations are based on MPC results for time-varying systems combined with turnpike concepts for stochastic optimal control problems. 
The obtained performance estimates were illustrated by a nonlinear example.

{\bibliographystyle{abbrv} 
  \bibliography{references} 
}

\end{document}